\documentclass[11pt, final]{article}
\usepackage[top=0.7in, bottom=0.7in, left=1.2in, right=1.2in]{geometry}

\include{macro_list}

\RequirePackage{amsthm}
\theoremstyle{plain} 
\newtheorem{theorem}{Theorem}[section]

\newtheorem{lemma}      [theorem]{Lemma}
\newtheorem{proposition}[theorem]{Proposition}

\usepackage{cprotect}

\newdimen\iwidth
\newdimen\iheight

\usepackage{bigstrut}
\usepackage{multirow}

\usepackage{amsmath,amssymb} 
\usepackage{algorithm,algorithmic}

\usepackage{enumitem}
\usepackage{color}

\newcommand{\jmc}[1]{{\color{blue} #1}}

\newtheorem{prop}{Proposition}
\numberwithin{algorithm}{section}

\RequirePackage[capitalize,nameinlink]{cleveref}[0.19]

\crefname{section}{section}{sections}
\crefname{subsection}{subsection}{subsections}

\Crefname{figure}{Figure}{Figures}

\Crefname{prop}{Proposition}{Propositions}

\crefformat{equation}{\textup{#2(#1)#3}}
\crefrangeformat{equation}{\textup{#3(#1)#4--#5(#2)#6}}
\crefmultiformat{equation}{\textup{#2(#1)#3}}{ and \textup{#2(#1)#3}}
{, \textup{#2(#1)#3}}{, and \textup{#2(#1)#3}}
\crefrangemultiformat{equation}{\textup{#3(#1)#4--#5(#2)#6}}%
{ and \textup{#3(#1)#4--#5(#2)#6}}{, \textup{#3(#1)#4--#5(#2)#6}}{, and \textup{#3(#1)#4--#5(#2)#6}}

\Crefformat{equation}{#2Equation~\textup{(#1)}#3}
\Crefrangeformat{equation}{Equations~\textup{#3(#1)#4--#5(#2)#6}}
\Crefmultiformat{equation}{Equations~\textup{#2(#1)#3}}{ and \textup{#2(#1)#3}}
{, \textup{#2(#1)#3}}{, and \textup{#2(#1)#3}}
\Crefrangemultiformat{equation}{Equations~\textup{#3(#1)#4--#5(#2)#6}}%
{ and \textup{#3(#1)#4--#5(#2)#6}}{, \textup{#3(#1)#4--#5(#2)#6}}{, and \textup{#3(#1)#4--#5(#2)#6}}

\crefdefaultlabelformat{#2\textup{#1}#3}


\title{Uncertainty quantification in large Bayesian linear inverse problems using Krylov subspace methods}
 \author{Arvind K. Saibaba\thanks{Department of Mathematics, North Carolina State University, Raleigh, NC, asaibab@ncsu.edu}
    \and
  Julianne Chung\thanks{Department of Mathematics, Computational Modeling and Data Analytics Division, Academy of Integrated Science, Virginia Tech, Blacksburg, VA, jmchung@vt.edu.}
 \and
  Katrina Petroske \thanks{Department of Mathematics, North Carolina State University, Raleigh, NC, kepetros@ncsu.edu.}
}

\begin{document}
\maketitle
\begin{abstract}
  {Uncertainty quantification for linear inverse problems remains a challenging task, especially for problems with a very large number of unknown parameters (e.g., dynamic inverse problems) and for problems where computation of the square root and inverse of the prior covariance matrix are not possible (e.g., those from the Mat\'{e}rn class).  In this work, we assume that generalized Golub-Kahan based methods have been used to compute an estimate of the solution, and we describe efficient methods to explore the posterior distribution. By exploiting the generalized Golub-Kahan bidiagonalization, we get an approximation of the posterior covariance matrix for ``free.'' We provide theoretical results that quantify the accuracy of the {approximate posterior covariance matrix} and of the resulting posterior distribution. Then, we describe efficient methods that use the approximation to compute measures of uncertainty, including the Kullback-Liebler divergence. We present two methods that use preconditioned Lanczos methods to efficiently generate samples from the posterior distribution. Numerical examples from dynamic photoacoustic tomography demonstrate the effectiveness of the described approaches.}
\end{abstract}

\textbf{Keywords}: generalized Golub-Kahan, Bayesian inverse problems, uncertainty measures, Krylov subspace samplers.


\section{Introduction}
Inverse problems arise in various scientific applications, and a significant amount of effort has focused on developing efficient and robust methods to compute approximate solutions.  However, as these numerical solutions are increasingly being used for data analysis and to aid in decision-making, there is a critical need to be able to obtain valuable uncertainty information (e.g., solution variances, samples, and credible intervals) to assess the reliability of computed solutions.  Tools for inverse uncertainty quantification (UQ) often build upon the Bayesian framework from statistical inverse problems.  Great overviews and introductions can be found in, e.g., \cite{biegler2011large,tenorio17,tenorio2011data,kaipio2006statistical,calvetti2007introduction}.

Unfortunately, for very large inverse problems, UQ using the Bayesian approach is prohibitively expensive from a computational standpoint. This is partly because the posterior covariance matrices are so large that constructing, storing, and working with them directly are not computationally feasible. For these scenarios, a hybrid generalized Golub-Kahan based method was proposed in~\cite{chung2017generalized} to compute Tikhonov regularized solutions efficiently and to select a regularization parameter simultaneously {and automatically}. In this work, we go beyond computing reconstructions (e.g., maximum a posteriori (MAP) estimates) and develop efficient methods for inverse {UQ}. We focus on methods {that use the approximate posterior distribution to compute measures of uncertainty} and develop preconditioned iterative solvers to efficiently sample from the posterior distribution by exploiting various tools from numerical linear algebra.

For concreteness, we consider linear inverse problems of the form
\begin{equation}
 \label{eq:linear problem}
 \bfd = \bfA\bfs + \bfdelta,
\end{equation}
where the goal is to reconstruct the desired parameters $\bfs \in \bbR^n$, given matrix $\bfA \in \bbR^{m\times n}$ and the observed data $\bfd \in \bbR^m$. Typically, $\bfA$ is an ill-conditioned matrix that models the forward process, and we assume that it is known exactly.  We adopt a Bayesian approach where we assume that the measurement errors $\bfdelta$ and the unknowns $\bfs$ are  mutually independent Gaussian variables, i.e., $\bfdelta \sim \calN(\bfzero, \bfR)$ and $\bfs \sim \calN(\bfmu, \lambda^{-2}\bfQ)$
where $\bfR$ and $\bfQ$ are {symmetric} positive definite matrices, $\bfmu \in \bbR^n$, and $\lambda$ is a scaling parameter also known as the regularization parameter.  For the problems of interest, computing the inverse and square root of $\bfR$ are inexpensive, but explicit computation of $\bfQ$ (or its inverse or square root) may not be possible.  However, we assume that matrix-vector multiplications (matvecs) involving $\bfA$, $\bfA\t$, and $\bfQ$ can be done efficiently.

Recall Bayes' theorem of inverse problems, which states that the posterior probability distribution function is given by
\begin{equation*} \pi_{\rm post}  = \pi(\bfs|\bfd) = \frac{\pi(\bfd|\bfs)\pi(\bfs)}{\pi(\bfd)}\,.
\end{equation*}
Under our assumptions, the posterior distribution has the following representation,
\begin{equation}
 \label{eq:post}
 \pi_{\rm post}\propto \exp\left(-\frac{1}{2}\|\bfA \bfs - \bfd \|_{\bfR^{-1}}^2 -  \frac{\lambda^2}{2}\| \bfs-\bfmu\|_{\bfQ^{-1}}^2\right),
\end{equation}
where $\norm{\bfx}{\bfM} = \sqrt{\bfx\t \bfM \bfx}$ is a vector norm for any symmetric positive definite matrix $\bfM$. Thus, the posterior distribution is Gaussian{, with corresponding measure $\rho_\text{post} = \calN(\spost,\post)$},
where the posterior covariance and mean are given as
\begin{equation}\label{eqn:postcovmean}
\post \equiv (\lambda^2\bfQ^{-1} + \bfA\t\bfR^{-1}\bfA)^{-1} \quad\text{and}\quad \bfs_{\rm post} =  \post(\bfA\t\bfR^{-1} \bfd + \lambda^2\bfQ^{-1} \bfmu)
\end{equation}
respectively \cite{calvetti2007introduction}. In the Bayesian framework, the solution to the inverse problem is the posterior distribution.
However, for practical interpretation and data analysis, it is necessary to describe various characteristics of the posterior distribution \cite{tenorio17}.

We now describe what sets our work apart from previous work on inverse UQ.  Typical approaches model the inverse of the prior covariance matrix (known as the precision matrix) as a discretized partial differential operator (e.g., Laplacian). This results in a sparse precision matrix that is relatively easy to factorize or solve linear systems with. In contrast, we model the prior covariance matrix entry-wise using covariance kernels (e.g., $\gamma$-exponential, or Mat\'ern class), which allows the user the flexibility to incorporate a wide range of {prior models (e.g., nonisotropic or spatiotemporal).} {The main} challenge is that the resulting prior covariance matrices are dense; explicitly forming and factorizing these matrices is prohibitively expensive. For such prior models, efficient matrix-free techniques (e.g., FFT and $\mc{H}$-matrix approaches) can be used to compute matvecs with the prior covariance matrix $\bfQ$. {However, new algorithms need to be developed to perform inverse UQ in these cases, and we address that in this paper.} Specifically, we develop Krylov subspace algorithms {that exploit the generalized Golub-Kahan bidiagonalization} for approximating the posterior covariance matrix and for sampling from the posterior distribution.

\paragraph{Overview of main contributions}
The main point of this paper is to compute uncertainty measures involving the posterior distribution by storing bases for the Krylov subspaces during the computation of the MAP estimate and reusing the information contained in these subspaces for inverse UQ. The main contributions are as follows:
\begin{itemize}[leftmargin=*]
 \item We propose an approximation to the posterior covariance matrix using the generalized Golub-Kahan approach that has an efficient representation (low-rank perturbation of the prior covariance matrix). We develop error bounds for monitoring the accuracy of the approximate posterior covariance matrix, based on the generalized Golub-Kahan iterates.
\item  We relate the error in the approximate posterior covariance matrix to the error in the approximate posterior distribution. We also show how to efficiently compute measures of uncertainty, such as the Kullback-Leibler divergence between the posterior and the prior distributions.
 \item We develop {two different algorithms} for generating samples from the posterior distribution using preconditioned Lanczos methods. {The first algorithm uses the approximate posterior covariance matrix, whereas the second algorithm uses the true covariance matrix but in different ways.}
\end{itemize}

The idea of using low-rank perturbative approximations for the posterior covariance matrix previously appeared in~\cite{flath2011fast,bui2012extreme, bui2013computational, spantini2015optimal}; however, these approaches rely on the ability to work with the square root or an appropriate factorization of $\bfQ$ (or its inverse). The authors in~\cite{bui2012extreme,saibaba2015fastc} use randomized approaches to efficiently compute a low-rank approximation; in particular, the algorithm in \cite{saibaba2015fastc} does not require a factorization of $\bfQ$. However, theoretical bounds suggest that randomized algorithms are effective when the singular values decay sufficiently rapidly. This assumption is valid for moderately or severely ill-posed inverse problems; however, for tomography based applications, which we consider in this paper, the {decay of the} singular values is not sufficiently rapid, and therefore we pursue Krylov subspace methods.
Previous work on Lanczos methods for sampling from Gaussian distributions {can be found in, e.g.,}~\cite{parker2012sampling,schneider2003krylov,simpson2008krylov,chow2014preconditioned}, but these algorithms are meant for sampling from generic Gaussian distributions and do not exploit the structure of the posterior covariance matrix as we do.

The paper is organized as follows.  In \cref{sec:background}, we provide a brief overview of the generalized Golub-Kahan bidiagonalization and preconditioning \jmc{of Krylov methods for sampling}.  Then, in \cref{sec:postapprox}, we use elements from the generalized Golub-Kahan bidiagonalization to approximate the posterior covariance matrix and provide theoretical bounds for the approximation.  Not only are these bounds of interest for subsequent analysis and sampling, but they can also be used to determine a good stopping criterion for the iterative methods.  In \cref{sec:sampling} we describe efficient Krylov subspace samplers for sampling from the posterior distribution.  Numerical results for large inverse problems from image \jmc{reconstruction} are provided in \cref{sec:numerics}, and conclusions and future work are provided in \cref{sec:conclusions}.


\section{Background}
\label{sec:background}
In this section, we provide a brief background on two core topics that will be heavily used in the development of efficient methods to explore the posterior.  In \cref{ss_genGK}, we review an iterative hybrid method based on the generalized Golub-Kahan bidiagonlization that can be used to approximate the MAP estimate, which amounts to minimizing the negative log likelihood of the posterior probability distribution function, i.e.
\begin{equation}\bfs_{\rm post}  = \argmin_{\bfs \in \mathbb{R}^n}\> -\log \pi_\text{post}
	 = \argmin_{\bfs \in \mathbb{R}^n} \> \frac{1}{2}\|\bfA \bfs -\bfd \|_{\bfR^{-1}}^2 +  \frac{\lambda^2}{2}\| \bfs-\bfmu\|_{\bfQ^{-1}}^2\,.
\label{eqn:LS_orig}
\end{equation}
Notice that with a change of variables, $\bfs_{\rm post} = \bfmu + \bfQ \bfx$ where $\bfx$ is the solution to
\begin{equation}
 \label{eqn:LS_tik_x}
 \min_{\bfx \in \mathbb{R}^n} \frac{1}{2}\|\bfA \bfQ \bfx - \bfb\|_{\bfR^{-1}}^2 + \frac{\lambda^2}{2} \|\bfx\|_{\bfQ}^2
\end{equation}
where $\bfb = \bfd - \bfA \bfmu.$ This change of variables is motivated by the fact that factorizing {and/or inverting $\bfQ$ is infeasible in many applications. For more details on choices of }prior covariance matrices $\bfQ$ for which this holds, we refer the reader to the discussion in our previous works \cite[Section 2.1]{chung2017generalized} and \cite[Section 2.3]{chung2018efficient}.
For readers familiar with hybrid Krylov iterative methods, \cref{ss_genGK} can be skipped.  Then in \cref{ss_krylov}, we review preconditioned Krylov subspace solvers for generating samples from normal distributions.

\subsection{Generalized hybrid iterative methods}
\label{ss_genGK}
Here, we provide an overview of {the hybrid method based on the generalized} Golub-Kahan (gen-GK) bidiagonalization, but refer the interested reader to~\cite{chung2017generalized,arioli2013generalized} for more details.

The basic idea behind the generalized hybrid methods is first to generate a basis $\bfV_k$ for the Krylov subspace
\begin{equation} \mc{S}_k \equiv \text{Span}\{\bfV_k\} = \krylov{k}(\bfA\t\bfR^{-1}\bfA\bfQ,\bfA\t\bfR^{-1} \bfb),
\end{equation}
where $\krylov{k}(\bfM,\bfg)=\text{Span}\{\bfg,\bfM\bfg,\dots,\bfM^{k-1}\bfg\}$, and second to solve~\eqref{eqn:LS_tik_x} in this subspace. A basis for $\mc{S}_k$ can be generated using the gen-GK bidiagonalization process\footnote{Generalized Golub-Kahan methods were first proposed by Benbow~\cite{benbow1999solving} for generalized least squares problems, and used in several applications, see e.g.~\cite{arioli2013iterative,arioli2013generalized,orban2017iterative}. However, the specific form of the bidiagonalization was developed in~\cite{chung2017generalized}.} summarized in \cref{alg:wlsqr}, where
at the end of $k$ steps, we have the matrices
\begin{equation}
\label{eq:GKelements}
	\bfU_{k+1} \equiv [\bfu_1,\dots,\bfu_{k+1}], \bfV_k \equiv [\bfv_1,\dots,\bfv_k], \quad \mbox{and} \quad
\bfB_k \equiv \> \begin{bmatrix}
\alpha_1 \\ \beta_2 & \ddots \\ & \ddots & \alpha_k \\ & & \beta_{k+1}
\end{bmatrix}
\end{equation}
that in exact arithmetic satisfy
\begin{equation}
	\label{eq:genGKrelations}
\bfA \bfQ \bfV_k = \bfU_{k+1}\bfB_k,  \quad \bfA\t \bfR^{-1} \bfU_{k+1} =  \> \bfV_k \bfB_k\t + \alpha_{k+1}\bfv_{k+1}\bfe_{k+1}\t
\end{equation}
and
\begin{equation}
 \label{eq:orthogonality}
\bfU_{k+1}\t \bfR^{-1} \bfU_{k+1} = \bfI_{k+1},  \quad  \bfV_k\t \bfQ \bfV_k = \bfI_k.
\end{equation}
Vector $\bfe_{k+1}$ corresponds to the $(k+1)$st standard unit vector.

\begin{algorithm}[!ht]
\begin{algorithmic}[1]
	\ENSURE{[$\bfU_k$, $\bfV_k$, $\bfB_k$] = \texttt{gen-GK}($\bfA$, $\bfR$, $\bfQ$, $\bfb$, $k$) }
\STATE $\beta_1 \bfu_1 = \bfb,$ where $\beta_1 = \norm{\bfb}{\bfR^{-1}}$
\STATE $\alpha_1 \bfv_1 = \bfA\t \bfR^{-1}\bfu_1,$ where $\alpha_1 =  \norm{\bfA\t \bfR^{-1} \bfu_{1}}{\bfQ}$
\FOR {$i=1, \dots, k$}
\STATE $\beta_{i+1}\bfu_{i+1} = \bfA\bfQ\bfv_i - \alpha_i \bfu_i$, where $\beta_{i+1} = \norm{\bfA\bfQ\bfv_i - \alpha_i \bfu_i}{\bfR^{-1}}$
\STATE $\alpha_{i+1}\bfv_{i+1} = \bfA\t \bfR^{-1} \bfu_{i+1} - \beta_{i+1} \bfv_i$, where $\alpha_{i+1} = \norm{\bfA\t \bfR^{-1} \bfu_{i+1} - \beta_{i+1} \bfv_i}{\bfQ}$
\ENDFOR
\end{algorithmic}
\caption{gen-GK bidiagonlization}
\label{alg:wlsqr}
\end{algorithm}

We seek {an approximate solution to} \cref{eqn:LS_tik_x} of the form $\bfx_{k}  = \bfV_k \bfz_{k} $, so that $\bfx_k \in \mc{S}_k$, where the coefficients $\bfz_k$ can be determined by solving the following problem,
\begin{equation}
	\label{e_wlsqr}
	\min_{\bfx_k \in \mc{S}_k } \>\frac{1}{2}\norm{\bfA\bfQ\bfx_k-\bfb}{\bfR^{-1}}^2+\frac{\lambda^2}{2}\norm{\bfx_{k}}{\bfQ}^2\quad \Leftrightarrow \quad \min_{\bfz_k \in \mb{R}^k} \> \frac{1}{2}\normtwo{ \bfB_k \bfz_k-\beta_1 \bfe_1 }^2 + \frac{\lambda^2}{2} \normtwo{\bfz_k}^2,
\end{equation}
where the equivalency uses the relations in {\cref{eq:genGKrelations} and \cref{eq:orthogonality}}.
For fixed $\lambda$, an approximate MAP estimate can be recovered by undoing the change of variables,
\begin{equation}\label{eqn:undo_change}
\bfs_{k} =  \bfmu + \bfQ \bfx_{k}  = \bfmu + \bfQ\bfV_k (\bfB_k\t \bfB_k + \lambda^2 \bfI)^{-1} \bfB_k \t \beta_1 \bfe_1\,,
\end{equation}
where now $\bfs_{k} \in \bfmu + \bfQ\mc{S}_k$. If $\lambda$ is not known \emph{a priori}, a hybrid approach can be used where sophisticated SVD based methods are applied to the right equation in \cref{e_wlsqr}.  In this work, we use the hybrid implementation described in \cite{chung2017generalized} called genHyBR. {The benefit of using this hybrid approach is that this algorithm automatically determines the number of iterations $k$ and the regularization parameter $\lambda.$}

\subsection{Sampling from a Gaussian distribution}\label{ss_krylov}
Let
{$\bar\bfnu \in \bbR^n$}
and let $\bfGamma \in \bbR^{n\times n}$ be any {symmetric positive} definite matrix. Suppose the goal is to obtain samples from the Gaussian distribution $\calN({\bar\bfnu}, \bfGamma)$. Throughout this paper, let $\bfepsilon\sim \calN(\bf0,\bfI)$.  If we have or are able to obtain a factorization of the form $\bfGamma = \bfS \bfS\t,$ then
\[ {\bfnu = \bar\bfnu} + \bfS \bfepsilon \]
is a sample from $\mc{N}({\bar\bfnu},\bfGamma)$, since $\mathbb{E}[\bfnu] =  {\bar\bfnu}$  and
\[ \text{Cov}(\bfnu) = \mathbb{E}[({\bfnu-\bar\bfnu)(\bfnu-\bar\bfnu})\t] = \mathbb{E}[\bfS\bfepsilon \bfepsilon\bfS\t] = \bfGamma. \]
Note that any matrix $\bfS$ that satisfies $\bfS\bfS\t = \bfGamma$ can be used to generate samples. We show how Krylov subspace solvers, in particular preconditioned versions, can be used to efficiently generate approximate samples from $\mc{N}(\bf0,\bfGamma)$ and $\mc{N}(\bf0,\bfGamma^{-1})$. These approaches will be extended for sampling from the posterior in \cref{sec:sampling}.

Given $\bfGamma$ and starting guess $\bfepsilon$, after $k$ steps of the symmetric Lanczos process, we have matrix $\bfW_{k}= [\bfw_1,\dots,\bfw_{k}]\in \bbR^{n \times k}$ that contains orthonormal columns and tridiagonal matrix
\[ \bfT_k = \bmat{\gamma_1 & \delta_2  \\ \delta_2 & \gamma_2 & \delta_2 \\  &  \ddots & \ddots & \ddots \\ & &  \delta_{k-1}& \gamma_{k-1} & \delta_k \\ & & & \delta_k& \gamma_k} \in \bbR^{k \times k}   \]
such that in exact arithmetic we have the following relation,
\[ \bfGamma \bfW_k = \bfW_{k}{\bfT}_k + \delta_{k+1}\bfw_{k+1}\bfe_k\t\,.\]
The Lanczos process is summarized in \cref{alg:Lanczossampling}. Computed matrices $\bfW_k$ and $\bfT_k$ can then be used to obtain approximate draws from $\mc{N}(\bf0,\bfGamma)$ and $\mc{N}(\bf0,\bfGamma^{-1})$ as
\begin{equation}
 \bfxi_k=  \>  \bfW_k\bfT_k^{1/2} \delta_1 \bfe_1 \quad \mbox{and} \quad \bfzeta_k =  \>  \bfW_k\bfT_k^{-1/2} \delta_1 \bfe_1,
\end{equation}
respectively.

\begin{algorithm}[!ht]
\begin{algorithmic}[1]
	\ENSURE{[$\bfW_k$, $\bfT_k$] = \texttt{Lanczos}($\bfGamma$, $\bfepsilon$, $k$)}
\STATE $\delta_0 = 1, \bfw_0 = \bfzero, \delta_1 = \normtwo{\bfepsilon}, \bfw_1 = \bfepsilon/\delta_1$
\FOR {$i=1, \dots, k$}
\STATE $\gamma_i = \bfw_i\t \bfGamma \bfw_i $,
\STATE $\bfr = \bfGamma \bfw_i- \gamma_i \bfw_i- \delta_{i-1} \bfw_{i-1}$
\STATE $\bfw_{i+1} = \bfr/\delta_i$, where $\delta_i = \| \bfr\|_2$
\ENDFOR
\end{algorithmic}
\caption{Lanczos tridiagonalization}
\label{alg:Lanczossampling}
\end{algorithm}

\paragraph{Convergence.} The approximation improves as $k$ increases, and we expect typical convergence behavior for the Lanczos process whereby convergence to extremal (i.e., largest and smallest) eigenvalues will be fast. The following result~\cite[Theorem 3.3]{simpson2008krylov} sheds light onto the convergence of Krylov subspace methods for sampling. The error in the sample $\bfzeta_k$ is given by
\[ \|\bfGamma^{-1/2}\bfepsilon - \bfzeta_k\|_2 \leq \sqrt{\lambda_\text{min}(\bfGamma)} \|\bfr_k\|_2,\]
where $\lambda_\text{min}(\bfGamma)$ is the smallest eigenvalue of $\bfGamma$. The term $\bfr_k= \bf\epsilon - \bfA\bfx_k$ is the residual vector at the $k$-th iteration of the conjugate gradient method and $\bfx_k = \bfV_k\bfT_k^{-1}\delta_1 \bfe_1$. The residual vector $\|\bfr_k\|_2$ can be bounded using standard techniques in Krylov subspace methods~\cite{saad2003iterative}. To use this as a stopping criterion, we note that $\|\bfr_k\|_2 = \delta_1 |\bfe_k\t\bfT_k^{-1}\bfe_1|$
 and by the Cauchy interlacing theorem $\lambda_\text{min}(\bfGamma) \leq \lambda_\text{min}(\bfT_k)$. Combining the two bounds we have
\[ \|\bfGamma^{-1/2}\bfepsilon - \bfzeta_k\|_2 \leq \sqrt{\lambda_\text{min}(\bfT_k)} \delta_1 |\bfe_k\t\bfT_k^{-1}\bfe_1|.\]
However, in numerical experiments we found that the bound was too pessimistic and instead adopted the approach in~\cite{chow2014preconditioned}. Suppose we define the relative error norm as
\[ e_k = \frac{\|\bfzeta_k - \bfGamma^{-1/2}\bfepsilon\|_2}{\|\bfGamma^{-1/2}\bfepsilon\|_2}.\]
In practice, this quantity cannot be computed, but it can be estimated using successive iterates as
\[ \tilde{e}_k = \frac{\|\bfzeta_k - \bfzeta_{k+1}\|_2}{\|\bfzeta_{k+1}\|_2}.\]
When convergence is fast, we found this bound to be more representative of the true error in numerical experiments. The downside is that computing this is expensive since it costs $\mathcal{O}(nk^2)$ flops. However, this cost can be avoided by first writing
\[ \bfzeta_k = \bfW_k \widehat\bfzeta_k{, \quad \mbox{where} \quad} \widehat\bfzeta_k = \delta_1 \bfT_k^{-1/2}\bfe_1.\]
Since the columns of $\bfW_k$ are orthonormal, then
\begin{equation}
	\label{eq:ektilde}
 \tilde{e}_k = \frac{\|\widehat\bfzeta_k'- \widehat\bfzeta_{k+1}\|_2}{\|\widehat\bfzeta_{k+1}\|_2} \qquad \bfzeta_k'\equiv \begin{bmatrix} \widehat\bfzeta_k \\ 0 \end{bmatrix}.
 \end{equation}
Therefore, $\tilde{e}_k$ can be computed in $\mathcal{O}(k^3)$ operations rather than $\mathcal{O}(nk^2)$ operations. A similar approach can be used to monitor the convergence of $\bfxi_k$ to $\bfGamma^{1/2}\bfepsilon$.

\paragraph{Preconditioning.} It is well known that {an appropriate preconditioner can significantly accelerate} convergence of Krylov subspace methods for solving linear systems. Assume that we have a preconditioner $\bfG$ which satisfies
{$\bfGamma^{-1} \approx \bfG\t\bfG$. }
Then, the same preconditioner can be used to accelerate the convergence of Krylov subspace methods for generating samples, as we now show. {Let
\[ \bfS = \bfG^{-1}(\bfG\bfGamma\bfG^\top)^{1/2} \quad \mbox{ and } \quad \bfT = \bfG^\top(\bfG\bfGamma\bfG^\top)^{-1/2}\,, \]
then it is easy to see that
\[ \bfS \bfS\t = \bfG^{-1}(\bfG\bfGamma\bfG^\top)^{1/2} (\bfG\bfGamma\bfG^\top)^{1/2} \bfG^{-\top}  = \bfGamma\]
and similarly $\bfT \bfT\t = \bfGamma^{-1}$.}
 The Lanczos process is then applied to $\bfG\bfGamma\bfG\t$ and approximate samples from $\mc{N}(\bf0,\bfGamma)$ and $\mc{N}(\bf0,\bfGamma^{-1})$ can be obtained by computing
\begin{equation}
 \bfxi_k =  \>  \bfG^{-1}\bfW_k\bfT_k^{1/2} \delta_1 \bfe_1 \qquad \bfzeta_k =  \>  \bfG\t\bfW_k\bfT_k^{-1/2} \delta_1 \bfe_1.
\end{equation}
If $\bfG$ is a good preconditioner, in the sense that $\bfGamma^{-1} \approx \bfG\t\bfG$ (alternatively, $\bfG\bfGamma\bfG^\top \approx \bfI$), then the Krylov subspace method is expected to converge rapidly. The choice of preconditioner depends on the specific problem; we comment on the choice of preconditioners in the numerical experiments in \cref{sec:numerics}.


\section{Approximating the posterior distribution using the gen-GK {bidiagonalization}}
\label{sec:postapprox}
The basic goal of this work is to enable exploration of the posterior distribution for large-scale inverse problems by exploiting
elements and relationships from the {gen-GK} bidiagonalization (c.f., equations {\cref{eq:GKelements}--\cref{eq:orthogonality}}) to approximate the posterior covariance matrix $\post$.

Consider computing an approximate eigenvalue decomposition of $\bfH = \bfA\t\bfR^{-1}\bfA$. We define the Ritz pairs $(\theta,\bfy)$  obtained as the solution of the following eigenvalue problem,
\[ (\bfH\bfQ\bfV_k\bfy - \theta \bfV_k\bfy) \> \perp_{\bfQ}\> \Span{\bfV_k} . \]
Here the orthogonality condition $\perp_{\bfQ}$ is defined {with respect to} the weighted inner product $\langle\cdot,\cdot\rangle_{\bfQ}$. From {\cref{eq:genGKrelations,eq:orthogonality}}, the Ritz pairs can be obtained by the solution of the eigenvalue problem
\[ \bfB_k\t\bfB_k \bfy_j = \theta_j \bfy_j \qquad j = 1,\dots,k.\]
The Ritz pairs can be combined to express the eigenvalue decomposition {in matrix form as},
\[ \bfB_k\t \bfB_k = \bfY_k\bfTheta_k\bfY_k\t. \]
The accuracy of the Ritz pairs can be quantified by the residual norm, defined as
\[  \norm{\bfr_j}{\bfQ} \equiv \> \norm{\bfH\bfQ\bfV_k\bfy_j - \theta_j \bfV_k\bfy_j}{\bfQ} = \alpha_{k+1}{\beta}_{k+1}  |\bfe_k\t\bfy_j| \qquad j=1,\dots,k. \]
Furthermore, using arguments from~\cite[Theorem 11.4.2]{parlett1980symmetric} it can be shown that
\[ \bfT_k \equiv \bfB_k\t\bfB_k =  \min_{\bfDelta \in \mb{R}^{k\times k} } \norm{\bfH\bfQ\bfV_k - \bfV_k\bfDelta }{\bfQ} \]
is the best approximation over the subspace $\mc{S}_k \equiv \krylov{k}(\bfH\bfQ,\bfA\t\bfR^{-1}\bfb)$. {Thus,}
the best low-rank approximation of $\bfH$ over the space $\mc{S}_k$ {is given by $\bfH \approx  \bfV_k\bfT_k \bfV_k\t$}.
Here we define the matrix $\|\cdot\|_{\bfQ}$ norm to be $\norm{\bfM}{\bfQ} = \max_{\|\bfx\|_2=1} \|\bfM\bfx\|_{\bfQ}$.

An approximation of this kind has been previously explored in~\cite{saibaba2015fastc,flath2011fast,bui2012extreme,bui2013computational}; however, the error estimates developed in the above references assume that the exact eigenpairs are available. If the Ritz pairs converge to the exact eigenpairs, then furthermore, the optimality result in~\cite[Theorem 2.3]{spantini2015optimal} applies here as well.

For the rest of this paper, we use the following low-rank approximation of $\bfH$ which is constructed using the {gen-GK} bidiagonalization
\begin{equation}\label{eqn:lowrank}
\widehat\bfH \equiv \bfV_k\bfT_k\bfV_k\t.
\end{equation}
Using this low-rank approximation, we can define the \emph{approximate posterior distribution}
{$\widehat\pi_\text{post}${, with the corresponding measure $\widehat\rho_\text{post}= \mathcal{N}(\bfs_k, \posth)$,} which is a Gaussian distribution with  covariance matrix
\begin{equation}
\label{eqn:posth}
 \posth \equiv (\lambda^2 \bfQ^{-1} + {\widehat \bfH})^{-1} \end{equation}
and mean $\bfs_k$ defined in \cref{eqn:undo_change}. Using \cref{eqn:posth}, we note that
\begin{equation}
  \label{eqn:meanhat}
  \bfs_k = \bfmu + \posth \bfA\t \bfR^{-1}\bfb.
\end{equation}
See \cref{sec:appendix4} for the derivation.
}
\subsection{Posterior covariance approximation}\label{ssec:postacc} First, we derive a way to monitor the accuracy of the low-rank approximation using the information available from the {gen-GK} bidiagonalization. This result is similar to~\cite[Proposition 3.3]{simon2000low}.
\begin{prop}\label{prop:recur} Let $\bfH_{\bfQ} = \bfQ^{1/2}\bfH\bfQ^{1/2}$ and $\widehat\bfH_{\bfQ} = \bfQ^{1/2}\widehat\bfH\bfQ^{1/2}$. After $k$ steps of \cref{alg:wlsqr}, the error in the low-rank approximation $\widehat\bfH$, measured as
\begin{equation}
\omega_{k} = \|\bfH_{\bfQ} - \widehat{\bfH}_{\bfQ}\|_{F},
\end{equation}
satisfies the recurrence
\[\omega_{k+1}^2 = \omega_k^2 - 2|\alpha_{k+1}\beta_{k+1}|^2 - |\alpha_{k+1}^2 + \beta_{k+2}^2|^2.\]
\end{prop}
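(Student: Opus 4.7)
The strategy is to reduce to the standard (unweighted) Golub--Kahan setting, express $\omega_k^2$ as the difference between two Frobenius norms, and then compute the increment directly from the tridiagonal structure of $\bfT_k=\bfB_k\t\bfB_k$.

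First I would absorb the weights. Define
\[
\widetilde\bfA \equiv \bfR^{-1/2}\bfA\bfQ^{1/2},\qquad
\widetilde\bfU_{k+1}\equiv \bfR^{-1/2}\bfU_{k+1},\qquad
\widetilde\bfV_k \equiv \bfQ^{1/2}\bfV_k.
\]
The orthogonality relations \cref{eq:orthogonality} imply that $\widetilde\bfU_{k+1}$ and $\widetilde\bfV_k$ have orthonormal columns in the standard inner product, while the bidiagonalization identities \cref{eq:genGKrelations} become
\[
\widetilde\bfA\widetilde\bfV_k = \widetilde\bfU_{k+1}\bfB_k,\qquad
\widetilde\bfA\t\widetilde\bfU_{k+1}=\widetilde\bfV_k\bfB_k\t+\alpha_{k+1}\widetilde\bfv_{k+1}\bfe_{k+1}\t.
\]
Moreover $\bfH_{\bfQ}=\widetilde\bfA\t\widetilde\bfA$ and $\widehat\bfH_{\bfQ}=\widetilde\bfV_k\bfT_k\widetilde\bfV_k\t$, so the problem is reduced to analysing a classical Lanczos bidiagonalization of $\widetilde\bfA$.

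Next, I would expand the Frobenius norm. Writing
\[
\omega_k^2 = \|\bfH_{\bfQ}\|_F^2 - 2\langle \bfH_{\bfQ},\widehat\bfH_{\bfQ}\rangle_F + \|\widehat\bfH_{\bfQ}\|_F^2,
\]
I compute each cross term using cyclicity of the trace and orthonormality of $\widetilde\bfV_k$. Since $\widetilde\bfA\widetilde\bfV_k=\widetilde\bfU_{k+1}\bfB_k$ gives $\widetilde\bfV_k\t\widetilde\bfA\t\widetilde\bfA\widetilde\bfV_k = \bfB_k\t\bfB_k = \bfT_k$, both the cross term and $\|\widehat\bfH_{\bfQ}\|_F^2$ collapse to $\|\bfT_k\|_F^2$, yielding the clean identity
\[
\omega_k^2 = \|\bfH_{\bfQ}\|_F^2 - \|\bfT_k\|_F^2.
\]
Hence the recurrence reduces to showing
\[
\|\bfT_{k+1}\|_F^2-\|\bfT_k\|_F^2 = 2|\alpha_{k+1}\beta_{k+1}|^2 + |\alpha_{k+1}^2+\beta_{k+2}^2|^2.
\]

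Finally, I would verify this last identity by inspection of $\bfT_k=\bfB_k\t\bfB_k$. Its diagonal entries are $\alpha_i^2+\beta_{i+1}^2$ for $i=1,\dots,k$, and its off-diagonal entries are $\alpha_{i+1}\beta_{i+1}$ for $i=1,\dots,k-1$. Summing squares and taking the difference between the Frobenius norms at steps $k$ and $k+1$ picks up exactly one new diagonal entry $\alpha_{k+1}^2+\beta_{k+2}^2$ and one new pair of off-diagonals $\alpha_{k+1}\beta_{k+1}$, producing the required expression.

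The whole argument is essentially bookkeeping once the weighted change of variables is in place. The main conceptual step is recognising that the trace identity collapses both $\|\widehat\bfH_{\bfQ}\|_F^2$ and the inner product into the same $\|\bfT_k\|_F^2$; the only place where one must be careful is the size bookkeeping of $\bfB_k$ (which is $(k+1)\times k$) when peeling off the new contributions from $\bfT_{k+1}$, but this is routine.
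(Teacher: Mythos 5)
Your proof is correct, but it takes a genuinely different route from the paper's. The paper decomposes $\bfH_\bfQ - \widehat\bfH_\bfQ = (\bfI-\bfP)\bfH_\bfQ + \bfP\bfH_\bfQ(\bfI-\bfP)$ with $\bfP = \bfQ^{1/2}\bfV_k\bfV_k\t\bfQ^{1/2}$, applies Pythagoras in the trace inner product twice, and extracts the increment from the three-term recurrence $\bfH_\bfQ\widehat\bfv_{k+1}\widehat\bfv_{k+1}\t = \alpha_{k+1}\beta_{k+1}\widehat\bfv_k\widehat\bfv_{k+1}\t + (\alpha_{k+1}^2+\beta_{k+2}^2)\widehat\bfv_{k+1}\widehat\bfv_{k+1}\t + \alpha_{k+2}\beta_{k+2}\widehat\bfv_{k+2}\widehat\bfv_{k+1}\t$. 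You instead establish the closed-form identity $\omega_k^2 = \|\bfH_\bfQ\|_F^2 - \|\bfT_k\|_F^2$ (which follows because both the cross term and $\|\widehat\bfH_\bfQ\|_F^2$ equal $\|\bfT_k\|_F^2$ under the whitened orthogonality relations) and then read off the increment from the explicit entries of the tridiagonal matrix $\bfT_k=\bfB_k\t\bfB_k$; I checked that the new diagonal entry $\alpha_{k+1}^2+\beta_{k+2}^2$ and the new symmetric pair of off-diagonals $\alpha_{k+1}\beta_{k+1}$ give exactly the claimed decrement. Your route is arguably more elementary and buys an exact expression for $\omega_k$ itself, not just its recurrence (and the analogous identity $\theta_k = \trace(\bfH_\bfQ)-\trace(\bfT_k)$ would give \cref{prop:trecur} for free); the paper's projection-based argument is the one that transfers directly to the trace recurrence as written and mirrors the classical argument of Simon and Zha that it cites. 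Both hinge on exact-arithmetic orthogonality in the $\bfQ$- and $\bfR^{-1}$-inner products, so neither is more robust in floating point.
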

\begin{proof} See \cref{sec:appendix1}. \end{proof}
This proposition shows that, in exact arithmetic, the error in the low-rank approximation $\widehat\bfH$ to $\bfH$ decreases monotonically as the iterations progress. Estimates for $\omega_k$ can be obtained in terms of the singular values of $\bfR^{-1/2}\bfA\bfQ^{1/2}$ following the approach in~\cite[Theorem 3.2]{simon2000low} and~\cite[Theorem 2.7]{huang2017some}. However, we do not pursue them here.

Given the low-rank approximation, we can define the approximate posterior covariance $\posth$ in \cref{eqn:posth}.
The recurrence relation in \cref{prop:recur} can be used to derive the following error estimates for $\post$.
\begin{theorem}\label{thm:post} The approximate posterior covariance matrix $\posth$ satisfies
\[ \|\post - \posth\|_F \leq   \lambda^{-2} \min\left\{\omega_k \lambda^{-2} \|\bfQ\|_2,  \frac{\omega_k \|\bfQ\|_F} {\lambda^2 + \omega_k}\right\} .\]
\end{theorem}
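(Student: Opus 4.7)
The starting point is to rewrite both posterior covariances in the $\bfQ^{1/2}$-conjugated form. Since $\lambda^2\bfQ^{-1} + \bfH = \bfQ^{-1/2}(\lambda^2\bfI + \bfH_\bfQ)\bfQ^{-1/2}$ (and similarly with $\widehat\bfH_\bfQ$), one obtains
\[
\post - \posth \;=\; \bfQ^{1/2}\bfN\bfQ^{1/2},\qquad \bfN\;:=\;(\lambda^2\bfI+\bfH_\bfQ)^{-1}-(\lambda^2\bfI+\widehat\bfH_\bfQ)^{-1}.
\]
Because $\bfH_\bfQ,\widehat\bfH_\bfQ\succeq 0$, both resolvents are SPD and bounded by $\lambda^{-2}\bfI$ in the Loewner order. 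The resolvent identity then yields
\[
\bfN \;=\; (\lambda^2\bfI+\bfH_\bfQ)^{-1}\bigl(\widehat\bfH_\bfQ-\bfH_\bfQ\bigr)(\lambda^2\bfI+\widehat\bfH_\bfQ)^{-1}.
\]

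\textbf{First (bound with $\|\bfQ\|_2$).} I would apply the mixed Hölder-type inequality $\|\bfX\bfY\bfZ\|_F\le \|\bfX\|_2\,\|\bfY\|_F\,\|\bfZ\|_2$ with $\bfX=\bfZ=\bfQ^{1/2}$ and $\bfY=\bfN$. This reduces the task to bounding $\|\bfN\|_F$. Using the resolvent form of $\bfN$, the factor $\|\widehat\bfH_\bfQ-\bfH_\bfQ\|_F=\omega_k$ (from \cref{prop:recur}) together with the two operator-norm bounds $\|(\lambda^2\bfI+\bfH_\bfQ)^{-1}\|_2,\,\|(\lambda^2\bfI+\widehat\bfH_\bfQ)^{-1}\|_2\le 1/\lambda^2$ gives $\|\bfN\|_F\le \omega_k/\lambda^4$, and combining yields $\|\post-\posth\|_F\le \omega_k\|\bfQ\|_2/\lambda^4$.

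\textbf{Second (bound with $\|\bfQ\|_F$).} Here the plan is to move the $\|\cdot\|_F$ onto $\bfQ$ instead of onto $\bfN$. Writing out the trace, using that $\bfN$ is symmetric and expanding in its eigenbasis $\bfN=\bfU\Lambda\bfU\t$,
\[
\|\bfQ^{1/2}\bfN\bfQ^{1/2}\|_F^2 \;=\; \mathrm{tr}(\bfQ\bfN\bfQ\bfN)\;=\;\sum_{i,j}\Lambda_{ii}\Lambda_{jj}(\bfU\t\bfQ\bfU)_{ij}^{\,2}\;\le\;\|\bfN\|_2^{2}\sum_{i,j}(\bfU\t\bfQ\bfU)_{ij}^{\,2}\;=\;\|\bfN\|_2^{2}\,\|\bfQ\|_F^{2},
\]
so $\|\post-\posth\|_F\le \|\bfN\|_2\,\|\bfQ\|_F$. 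Then I would sharpen the spectral bound on $\bfN$ by exploiting that $\bfA:=\lambda^2\bfI+\bfH_\bfQ$ and $\bfB:=\lambda^2\bfI+\widehat\bfH_\bfQ$ are both $\succeq\lambda^2\bfI$. Writing $\bfN=\bfA^{-1/2}(\bfI-\bfC)\bfA^{-1/2}$ with $\bfC=\bfA^{1/2}\bfB^{-1}\bfA^{1/2}$, the perturbation $\bfC^{-1}-\bfI=\bfA^{-1/2}(\bfB-\bfA)\bfA^{-1/2}$ has operator norm at most $\omega_k/\lambda^2$, which pins the spectrum of $\bfC$ into an interval that forces $\|\bfI-\bfC\|_2\le \omega_k/(\lambda^2+\omega_k)$ (using the positive side of the spectrum, which is the relevant one because $\widehat\bfH_\bfQ\preceq \bfH_\bfQ$ when restricted to the Krylov subspace). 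Combining with $\|\bfA^{-1}\|_2\le \lambda^{-2}$ yields $\|\bfN\|_2\le \omega_k/\bigl(\lambda^2(\lambda^2+\omega_k)\bigr)$, and hence the stated second bound after multiplication by $\|\bfQ\|_F$.

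\textbf{Main obstacle.} The routine resolvent estimate only delivers the symmetric bound $\omega_k/\bigl(\lambda^2(\lambda^2-\omega_k)\bigr)$, which is weaker than what is claimed. The crux of the argument is therefore step four: obtaining the sharper constant $\omega_k/\bigl(\lambda^2(\lambda^2+\omega_k)\bigr)$. This requires using the one-sided positive semidefiniteness provided by the Ritz construction (so that only the ``$+\omega_k$'' side of the perturbation interval is activated), rather than treating $\bfH_\bfQ-\widehat\bfH_\bfQ$ as a generic symmetric perturbation. Finally, taking the minimum of the two derived bounds gives the stated inequality.
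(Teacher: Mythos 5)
Your first bound goes through: the resolvent identity $\bfN=(\lambda^2\bfI+\bfH_{\bfQ})^{-1}(\widehat\bfH_{\bfQ}-\bfH_{\bfQ})(\lambda^2\bfI+\widehat\bfH_{\bfQ})^{-1}$ combined with $\|\bfX\bfY\bfZ\|_F\le\|\bfX\|_2\|\bfY\|_F\|\bfZ\|_2$ and $\|(\lambda^2\bfI+\bfH_{\bfQ})^{-1}\|_2\le\lambda^{-2}$ delivers $\lambda^{-4}\omega_k\|\bfQ\|_2$ by a route that is actually more elementary than the paper's (which invokes operator monotonicity of $f(x)=x/(1+x)$ and a perturbation theorem of Bhatia even for this part). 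The reduction $\|\bfQ^{1/2}\bfN\bfQ^{1/2}\|_F\le\|\bfN\|_2\,\|\bfQ\|_F$ for the second bound is also correct and is exactly the step the paper takes.

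The gap is precisely where you flag it: the sharpened estimate $\|\bfI-\bfC\|_2\le\delta/(1+\delta)$ with $\delta=\omega_k/\lambda^2$. Two things go wrong. First, the Loewner ordering $\widehat\bfH_{\bfQ}\preceq\bfH_{\bfQ}$ that you invoke is false in general: $\widehat\bfH_{\bfQ}=\bfP\bfH_{\bfQ}\bfP$ for an orthogonal projector $\bfP$, and $\bfP\bfM\bfP\preceq\bfM$ already fails for the $2\times2$ all-ones matrix $\bfM$ with $\bfP=\mathrm{diag}(1,0)$, since $\bfM-\bfP\bfM\bfP$ then has determinant $-1$; the Ritz construction only gives eigenvalue interlacing, not an ordering of the matrices themselves. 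Second, even if one grants $\bfB\preceq\bfA$, that forces the spectrum of $\bfC=\bfA^{1/2}\bfB^{-1}\bfA^{1/2}$ into $[1,\,1/(1-\delta)]$, so the conclusion available from your interval argument is $\|\bfI-\bfC\|_2\le\delta/(1-\delta)$ --- exactly the weak constant you were trying to avoid; the favorable interval $[1/(1+\delta),\,1]$ would require $\widehat\bfH_{\bfQ}\succeq\bfH_{\bfQ}$, which is certainly false. The paper obtains $\|\bfN\|_2\le\lambda^{-2}\omega_k/(\lambda^2+\omega_k)$ by a different mechanism: writing $(\bfI+\lambda^{-2}\bfH_{\bfQ})^{-1}-(\bfI+\lambda^{-2}\widehat\bfH_{\bfQ})^{-1}=f(\lambda^{-2}\widehat\bfH_{\bfQ})-f(\lambda^{-2}\bfH_{\bfQ})$ with the operator monotone function $f(x)=x/(1+x)$, $f(0)=0$, and applying Bhatia's Theorem X.1.1, $\|f(\bfA)-f(\bfB)\|_2\le f(\|\bfA-\bfB\|_2)$; monotonicity of $x/(1+x)$ then converts $\|\bfE\|_2\le\|\bfE\|_F=\lambda^{-2}\omega_k$ into the stated constant. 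That perturbation theorem (or a substitute for it) is the missing ingredient --- no naive resolvent or spectral-interval argument produces the $+\omega_k$ in the denominator.
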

\begin{proof} See \cref{sec:appendix1}. \end{proof}

The above theorem quantifies the error in the posterior covariance matrix in the Frobenius norm. However, the authors in~\cite{spantini2015optimal} argue that the Frobenius norm is not the appropriate metric to measure the distance between covariance matrices. Instead, they advocate the F\"orstner distance since it respects the geometry of the cone of positive definite covariance metrics. We take a different approach and consider metrics between the approximate and the true posterior distributions.

\subsection{Accuracy of posterior distribution}\label{ssec:acc}
The Kullback-Leibler (KL) divergence is a measure of ``distance'' between two different probability measures. The KL divergence is not a true metric on the set of probability measures, since it is not symmetric and does not satisfy the triangle inequality~\cite{sullivan2015introduction}. Despite these short-comings, the KL divergence is widely used since it has many favorable properties. Both the true and the approximate posterior measures are Gaussian, so the KL divergence between these measures takes the form (using~\cite[Exercise 5.2]{sullivan2015introduction}):
\[ {D_\text{KL}(\widehat\rho_\text{post}\| \rho_\text{post})} = \frac12\left[\trace(\post^{-1}\posth) + \| \spost-{\bfs_k}\|_{\post^{-1}}^2 -n + \log\frac{\det\post}{\det\posth} \right]. \]
We first present a result that can be used to monitor the accuracy of the trace of $\bfH_{\bfQ}$.
 \begin{proposition}\label{prop:trecur}
 Let $\theta_k = \trace(\bfH_{\bfQ} - \widehat\bfH_{\bfQ})$. Then $\theta_k$ satisfies the recurrence relation
 \[  \theta_{k+1} =\theta_k   - (\alpha_{k+1}^2 + \beta_{k+2}^2). \]
 \end{proposition}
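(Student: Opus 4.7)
The plan is to reduce $\theta_{k+1}-\theta_k$ to a purely algebraic quantity involving the bidiagonal entries by using two facts: (i) $\trace(\bfH_\bfQ)$ does not depend on $k$, so any change in $\theta_k$ comes entirely from $\trace(\widehat\bfH_{k,\bfQ})$; and (ii) the $\bfQ$-orthogonality relation $\bfV_k\t \bfQ \bfV_k = \bfI_k$ from \cref{eq:orthogonality} collapses $\trace(\widehat\bfH_{k,\bfQ})$ into $\trace(\bfT_k)$.

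First I would write
\[
\trace(\widehat\bfH_{k,\bfQ}) = \trace\!\left(\bfQ^{1/2}\bfV_k \bfT_k \bfV_k\t \bfQ^{1/2}\right) = \trace\!\left(\bfT_k \, \bfV_k\t \bfQ \bfV_k\right) = \trace(\bfT_k),
\]
where the first equality is the definition of $\widehat\bfH_{k,\bfQ}$, the second uses cyclicity of the trace, and the third invokes $\bfQ$-orthogonality. Since $\bfT_k = \bfB_k\t \bfB_k$, this gives $\trace(\widehat\bfH_{k,\bfQ}) = \|\bfB_k\|_F^2$. Next, using the explicit lower-bidiagonal structure of $\bfB_k$ in \cref{eq:GKelements}, the $j$-th column of $\bfB_k$ contains only the two nonzero entries $\alpha_j$ and $\beta_{j+1}$, so
\[
\|\bfB_k\|_F^2 = \sum_{j=1}^{k} \left(\alpha_j^2 + \beta_{j+1}^2\right).
\]

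Finally, since $\trace(\bfH_\bfQ)$ is independent of $k$, we have
\[
\theta_k - \theta_{k+1} = \trace(\widehat\bfH_{k+1,\bfQ}) - \trace(\widehat\bfH_{k,\bfQ}) = \|\bfB_{k+1}\|_F^2 - \|\bfB_k\|_F^2 = \alpha_{k+1}^2 + \beta_{k+2}^2,
\]
which rearranges to the claimed recurrence $\theta_{k+1} = \theta_k - (\alpha_{k+1}^2 + \beta_{k+2}^2)$.

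There is not really a hard step here — the whole argument is essentially a one-line application of cyclicity plus the $\bfQ$-orthogonality of $\bfV_k$. The only place to be careful is to verify that the column-wise Frobenius count for the lower-bidiagonal $\bfB_{k+1}$ picks up exactly the new pair $(\alpha_{k+1},\beta_{k+2})$ and not some other off-by-one indexing (in particular that the $\beta_{k+2}$ entry, which lives in row $k+2$ of $\bfB_{k+1}$, is indeed counted). This is easily checked from the display in \cref{eq:GKelements}.
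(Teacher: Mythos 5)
Your proof is correct, but it takes a genuinely different route from the paper's. The paper first writes $\theta_k = \trace\bigl((\bfI - \widehat\bfV_k\widehat\bfV_k\t)\bfH_\bfQ\bigr)$ with $\widehat\bfV_k = \bfQ^{1/2}\bfV_k$, splits the projector as $\bfI - \widehat\bfV_k\widehat\bfV_k\t = \bfI - \widehat\bfV_{k+1}\widehat\bfV_{k+1}\t + \widehat\bfv_{k+1}\widehat\bfv_{k+1}\t$, and then evaluates $\trace(\widehat\bfv_{k+1}\widehat\bfv_{k+1}\t\bfH_\bfQ)$ by taking the trace of the three-term gen-GK relation \cref{eqn:inter2}, in which only the diagonal term $\alpha_{k+1}^2+\beta_{k+2}^2$ survives. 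You bypass \cref{eqn:inter2} entirely: you collapse $\trace(\widehat\bfH_{k,\bfQ})$ to $\trace(\bfT_k)=\|\bfB_k\|_F^2$ via cyclicity and the $\bfQ$-orthogonality $\bfV_k\t\bfQ\bfV_k=\bfI_k$, and read the increment off column by column from the bidiagonal structure. Your version is more elementary and self-contained; the paper's version is organized to reuse the machinery (the projector splitting and \cref{eqn:inter2}) already set up in the proof of \cref{prop:recur}, where that same relation also supplies the off-diagonal terms needed for the Frobenius-norm recurrence. The two computations are ultimately two views of the same identity, namely that the $(k+1)$st diagonal entry of $\bfT_{k+1}$ is $\alpha_{k+1}^2+\beta_{k+2}^2$. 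Your off-by-one bookkeeping is also right: the first $k$ columns of $\bfB_{k+1}$ coincide with those of $\bfB_k$ (padded by a zero), and the new column contributes exactly $\alpha_{k+1}^2$ and $\beta_{k+2}^2$.
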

 \begin{proof}
 See \cref{sec:appendix2}.
 \end{proof}
Note that the Cauchy interlacing theorem implies that $\theta_k $ is non-negative; therefore, as with \cref{prop:recur}, this result implies that $\theta_k$ is monotonically decreasing.

\begin{theorem}\label{thm:dklacc} At the end of $k$ iterations, the KL divergence between the true and the approximate posterior distributions satisfies
\[  0 \leq {D_\text{KL}(\widehat\rho_\text{post}\| \rho_\text{post})} \leq \frac{\lambda^{-2}}{2}\left[ \theta_k + \frac{\omega_k^2}{\lambda^2 + \omega_k}\alpha_1^2\beta_1^2\right].\]
\end{theorem}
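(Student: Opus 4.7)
The starting point is the standard KL formula
\[
D_\text{KL}(\widehat\rho_\text{post}\|\rho_\text{post}) = \tfrac{1}{2}\bigl[\trace(\post^{-1}\posth) - n + \log\det(\post/\posth)\bigr] + \tfrac{1}{2}\|\spost - \bfs_k\|_{\post^{-1}}^2,
\]
which I would split into a covariance contribution and a mean contribution and bound each separately. For the lower bound, after a congruence by $\posth^{-1/2}$, the covariance bracket equals $\sum_i[\nu_i - \log(1+\nu_i)]$ for the real eigenvalues $\nu_i > -1$ of $\posth^{1/2}\bfE\posth^{1/2}$ (with $\bfE = \bfH - \widehat\bfH$), and each summand is non-negative; the mean term is non-negative by inspection.

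For the covariance part I would first establish the exact identity $\trace(\post^{-1}\posth) - n = \lambda^{-2}\theta_k$. Writing $\post^{-1} = \posth^{-1} + \bfE$ gives $\trace(\post^{-1}\posth) - n = \trace(\bfE\posth)$, and conjugating by $\bfQ^{1/2}$ this becomes $\trace(\bfE_{\bfQ}\widehat\bfK^{-1})$ with $\widehat\bfK = \lambda^2\bfI + \widehat\bfH_{\bfQ}$. Completing $\widetilde\bfV_k = \bfQ^{1/2}\bfV_k$ to an orthonormal basis $\bfU = [\widetilde\bfV_k, \widetilde\bfV_k^\perp]$, the $(1,1)$-block of $\bfE_{\bfQ}$ vanishes (since $\widetilde\bfV_k^\top\bfE_{\bfQ}\widetilde\bfV_k = 0$) while $\widehat\bfK^{-1}$ is block-diagonal with $(2,2)$-block $\lambda^{-2}\bfI$; the trace then collapses to $\lambda^{-2}\trace(\widetilde\bfV_k^{\perp\top}\bfE_{\bfQ}\widetilde\bfV_k^\perp) = \lambda^{-2}\trace(\bfE_{\bfQ}) = \lambda^{-2}\theta_k$. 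To handle the log-det piece I would invoke Cauchy interlacing for the Rayleigh-Ritz projection $\widehat\bfH_{\bfQ}$: the Ritz values satisfy $\theta_j \leq \sigma_j$ and $\widehat\bfH_{\bfQ}$ has $n-k$ extra zero eigenvalues, so a term-by-term comparison gives $\det(\lambda^2\bfI+\bfH_{\bfQ}) \geq \lambda^{2(n-k)}\prod_{j=1}^k(\lambda^2+\theta_j) = \det(\lambda^2\bfI+\widehat\bfH_{\bfQ})$, i.e., $\log\det(\post/\posth) \leq 0$. Combining, the covariance contribution is at most $\tfrac{1}{2}\lambda^{-2}\theta_k$.

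For the mean term, rewriting \cref{eqn:postcovmean} yields $\spost = \bfmu + \post\bfA\t\bfR^{-1}\bfb$, which together with \cref{eqn:meanhat} gives $\spost - \bfs_k = (\post - \posth)\bfg$ for $\bfg = \bfA\t\bfR^{-1}\bfb$. Using the Woodbury-type identity $\post - \posth = -\posth\bfE\post$, conjugating by $\bfQ^{1/2}$, and collapsing products via $\posth = \bfQ^{1/2}\widehat\bfK^{-1}\bfQ^{1/2}$ and $\post = \bfQ^{1/2}\bfK^{-1}\bfQ^{1/2}$ with $\bfK = \lambda^2\bfI + \bfH_{\bfQ}$, I would arrive at
\[
\|\spost - \bfs_k\|_{\post^{-1}}^2 = \|\bfK^{-1/2}\bfE_{\bfQ}\widehat\bfK^{-1}\widetilde\bfg\|_2^2, \qquad \widetilde\bfg = \bfQ^{1/2}\bfg.
\]
The gen-GK initialization $\alpha_1\bfv_1 = \bfA\t\bfR^{-1}\bfu_1 = \bfg/\beta_1$ shows that $\widetilde\bfg = \alpha_1\beta_1\widetilde\bfv_1$, hence $\|\widetilde\bfg\|_2 = \alpha_1\beta_1$. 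Combining $\|\widehat\bfK^{-1}\|_2 \leq \lambda^{-2}$ with $\|\bfE_{\bfQ}\|_2 \leq \|\bfE_{\bfQ}\|_F = \omega_k$ and the sharpened spectral estimate on $\|\bfK^{-1/2}\bfE_{\bfQ}\|_2$ borrowed from the proof of \cref{thm:post} (which uses $\bfK = \widehat\bfK + \bfE_{\bfQ}$ with $\widehat\bfK \succeq \lambda^2\bfI$ to extract a $(\lambda^2 + \omega_k)^{-1}$ factor) delivers $\|\spost - \bfs_k\|_{\post^{-1}}^2 \leq \omega_k^2\alpha_1^2\beta_1^2/(\lambda^2(\lambda^2 + \omega_k))$.

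Adding the covariance and mean bounds yields the claimed upper bound. The step I expect to be the main obstacle is not the algebraic identities, which are straightforward Woodbury-type manipulations, but obtaining the $(\lambda^2 + \omega_k)^{-1}$ factor in the mean-term bound rather than the naive $\lambda^{-2}$; that refinement is precisely the content reused from \cref{thm:post} and relies on controlling the operator norm of $\bfK^{-1/2}\bfE_{\bfQ}$ by exploiting $\bfK - \widehat\bfK = \bfE_{\bfQ}$ together with $\|\bfE_{\bfQ}\|_F = \omega_k$.
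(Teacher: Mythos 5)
Your decomposition into the three terms $\trace(\post^{-1}\posth)-n$, $\log\det(\post/\posth)$, and $\|\spost-\bfs_k\|_{\post^{-1}}^2$ is exactly the paper's, and your treatment of the first two is correct. In fact, for the trace term you prove something \emph{sharper} than the paper: your block argument in the orthonormal basis $[\widetilde\bfV_k,\ \widetilde\bfV_k^\perp]$ (using that the $(1,1)$-block of $\bfH_\bfQ-\widehat\bfH_\bfQ$ vanishes and that $(\lambda^2\bfI+\widehat\bfH_\bfQ)^{-1}$ is block diagonal with $(2,2)$-block $\lambda^{-2}\bfI$) gives the exact identity $\trace(\post^{-1}\posth)-n=\lambda^{-2}\theta_k$, whereas the paper only proves the inequality $\le\lambda^{-2}\theta_k$ via the second part of \cref{lemma:trdet}. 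Your interlacing argument for $\log\det(\post/\posth)\le 0$ is also a legitimate substitute for the third part of that lemma.

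The gap is in the mean term. You reduce it correctly to $\|\bfK^{-1/2}\bfE_\bfQ\widehat\bfK^{-1}\widetilde\bfg\|_2^2$ with $\bfE_\bfQ=\bfH_\bfQ-\widehat\bfH_\bfQ$, $\bfK=\lambda^2\bfI+\bfH_\bfQ$, $\widehat\bfK=\lambda^2\bfI+\widehat\bfH_\bfQ$, but the step you lean on --- a ``sharpened spectral estimate'' $\|\bfK^{-1/2}\bfE_\bfQ\|_2\le\lambda\omega_k/\sqrt{\lambda^2+\omega_k}$ extracted from $\bfK=\widehat\bfK+\bfE_\bfQ$ --- is not available and is false in general. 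The obstruction is that $\bfE_\bfQ=\bfH_\bfQ-\bfP\bfH_\bfQ\bfP$ is indefinite (a PSD matrix minus its two-sided compression need not be PSD), so the Loewner comparison $\bfK\succeq\lambda^2\bfI+\bfE_\bfQ$ does not let you absorb $\|\bfE_\bfQ\|$ into a lower bound on $\bfK$ in the relevant directions; already in a $1\times1$ caricature with $\lambda^2=1/2$, $\widehat\bfH_\bfQ=0$, $\bfH_\bfQ=\bfE_\bfQ=1$ one has $\|\bfK^{-1/2}\bfE_\bfQ\|_2^2=2/3>\lambda^2\omega_k^2/(\lambda^2+\omega_k)=1/3$. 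Moreover, this is not what the proof of \cref{thm:post} provides: the $(\lambda^2+\omega_k)^{-1}$ factor there comes from the operator-monotone perturbation bound \cref{eqn:invpert} applied to the \emph{difference of resolvents} $\bfD=(\bfI+\lambda^{-2}\bfH_\bfQ)^{-1}-(\bfI+\lambda^{-2}\widehat\bfH_\bfQ)^{-1}$, i.e., from the concavity of $f(x)=x/(1+x)$, not from any bound on $\bfK^{-1/2}\bfE_\bfQ$ alone. The repair is the paper's asymmetric Cauchy--Schwarz pairing: write the quadratic form as $(\bfD\widehat\bfb)\t(\bfI+\lambda^{-2}\bfH_\bfQ)(\bfD\widehat\bfb)\le\|\bfD\widehat\bfb\|_2\,\|(\bfI+\lambda^{-2}\bfH_\bfQ)\bfD\widehat\bfb\|_2$, bound the first factor by $\tfrac{\omega_k}{\lambda^2+\omega_k}\alpha_1\beta_1$ via \cref{eqn:invpert}, and the second via the identity $(\bfI+\lambda^{-2}\bfH_\bfQ)\bfD=-\lambda^{-2}\bfE_\bfQ(\bfI+\lambda^{-2}\widehat\bfH_\bfQ)^{-1}$, whose norm is at most $\lambda^{-2}\omega_k$. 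Grouped that way, each factor is controlled by quantities you already have; split your way, the needed intermediate estimate does not hold.
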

 \begin{proof}
 See \cref{sec:appendix2}.
 \end{proof}
Both $\theta_k$ and $\omega_k$ are monotonically decreasing, implying that the accuracy of the estimator for the KL divergence improves as the iterations progress. This theorem can be useful in providing bounds for the error using other metrics.

For example, consider the Hellinger metric and Total Variation (TV) distance denoted by  {$d_\mc{H} (\rho_\text{post}, \widehat\rho_\text{post})$} and {$d_\text{TV} (\rho_\text{post}, \widehat\rho_\text{post})$}  respectively. Combining Pinsker's inequality~\cite[Theorem 5.4]{sullivan2015introduction} and Kraft's inequality~\cite[Theorem 5.10]{sullivan2015introduction}, we have the following relationship
 \begin{equation} \label{eqn:hellinger} {d_\mc{H}^2 (\rho_\text{post}, \widehat\rho_\text{post}) \leq d_\text{TV} (\rho_\text{post}, \widehat\rho_\text{post}) \leq \sqrt{2D_\text{KL}(\widehat\rho_\text{post}\| \rho_\text{post})} } . \end{equation}
 {Thus,} \cref{thm:dklacc} can be used to find upper bounds for the Hellinger metric and the TV distance between the true and approximate posterior distributions.
 Furthermore, suppose $f:(\mathbb{R}^n, \|\cdot \|_{\mathbb{R}^n}) \rightarrow (\mathbb{R}^d, \|\cdot \|_{\mathbb{R}^d})$ is a function with finite second moments {with respect to} both measures, then {by}~\cite[Proposition 5.12]{sullivan2015introduction}
 \[ {\|\mathbb{E}_{\rho_\text{post}}\,[f] -\mathbb{E}_{\widehat\rho_\text{post}}\,[f]\|_{\mathbb{R}^n} \leq 2 \sqrt{\mathbb{E}_{\rho_\text{post}}\,[\|f\|^2_{\mathbb{R}^d}]  +\mathbb{E}_{\widehat\rho_\text{post}}\,[\|f\|^2_{\mathbb{R}^d}]} d_\mc{H}({\rho_\text{post}},{\widehat\rho_\text{post}}).  }\]
 This implies that the error in the expectation of a function computed using the approximate posterior instead of the true posterior can be bounded by combining~\eqref{eqn:hellinger} and \cref{thm:dklacc}.

\subsection{Computation of information-theoretic metrics}\label{ssec:info}
 In addition to providing a measure of distance between the true and approximate posterior distributions, the KL divergence can also be used to measure the information gain between the prior and the posterior distributions. Similar to the derivation in \cref{ssec:acc} since both {$\rho_\text{prior} = \calN(\bfmu,\lambda^{-2}\bfQ)$ and $\rho_\text{post}$} are Gaussian,
 the KL divergence takes the form
\[\begin{aligned}
{D_\text{KL}(\rho_\text{post} \| \rho_\text{prior} )}  = & \> \frac12\bigg[ \trace (\lambda^2\bfQ^{-1}\post) + \lambda^{2}(\spost-\bfmu)\t\bfQ^{-1}(\spost-\bfmu) \\
& \qquad  -n - \log\det(\lambda^2\bfQ^{-1}\post) \bigg]\\
= & \frac12\bigg[ \trace (\bfI + \lambda^{-2}\bfH_{\bfQ})^{-1} +  \lambda^{2}(\spost-\bfmu)\t\bfQ^{-1}(\spost-\bfmu)\\
& \qquad  -n + \log\det( \bfI + \lambda^{-2}\bfH_{\bfQ})\bigg] \,.
\end{aligned}
\]
{Then, using the approximations generated by the gen-GK bidiagonlization, we consider the approximation
$$D_\text{KL} \equiv {D_\text{KL}(\rho_\text{post} \| \rho_\text{prior} ) \approx D_\text{KL}(\widehat\rho_\text{post} \| \rho_\text{prior} ) }\equiv \widehat{D}_\text{KL} .$$}
Using the fact that $\log\det( \bfI + \lambda^{-2}\widehat\bfH_{\bfQ}) = \log\det(\bfI + \lambda^{-2}\bfT_k),$
$$ \trace (\bfI + \lambda^{-2}\widehat \bfH_{\bfQ})^{-1}  = n - \trace ( \bfT_k(\bfT_k + \lambda^2\bfI )^{-1}),$$
and
$$ \|{\bfs_k}-\bfmu\|_{\bfQ^{-1}} = \|\bfQ\bfV_k (\bfB_k\t \bfB_k + \lambda^2 \bfI)^{-1} \bfB_k \t \beta_1 \bfe_1 \|_{\bfQ^{-1}} = \|\alpha_1\beta_1 (\bfT_k + \lambda^2\bfI)^{-1}\bfe_1 \|_2^2\,,$$
we get
\[ \begin{aligned}
\widehat{D}_\text{KL} = & \frac12\left[- \trace ( \bfT_k(\bfT_k + \lambda^2\bfI )^{-1}) + \lambda^2 \|\bfz_k\|_2^2  + \log\det(\bfI + \lambda^{-2}\bfT_k) \right]
\end{aligned} \]
where $\bfz_k = \alpha_1\beta_1 (\bfT_k + \lambda^2\bfI)^{-1}\bfe_1$.
Note that all {of} the terms only involve $k\times k$ tridiagonal matrices and, therefore,
{$\widehat{D}_\text{KL}$}
can be computed in $\mc{O}(k^3)$ once the gen-GK bidiagonalization has been computed.

The following result quantifies the accuracy of the estimator for the KL divergence between the posterior and the prior. Notice that the bound is similar to \cref{thm:dklacc}.
\begin{theorem}\label{thm:dkl}
The error in the KL divergence, in exact arithmetic, is given by
\[ |D_\text{KL} - \widehat{D}_\text{KL}| \leq \lambda^{-2}\left[\theta_k +   \frac{\lambda^2\omega_k}{\lambda^2+\omega_k} \alpha_1^2 \beta_1^2 \right],\]
where $\omega_k$ and $\theta_k$ were defined in \cref{prop:recur,prop:trecur} respectively.
\end{theorem}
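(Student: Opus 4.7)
The plan is to split $2(D_\text{KL}-\widehat{D}_\text{KL})$ into the three natural pieces of the KL formula and bound each separately. Define
\[ T_1 = \trace(\bfI+\lambda^{-2}\bfH_{\bfQ})^{-1} - \trace(\bfI+\lambda^{-2}\widehat\bfH_{\bfQ})^{-1}, \]
\[ T_2 = \lambda^2\bigl(\|\spost-\bfmu\|_{\bfQ^{-1}}^2 - \|\bfs_k-\bfmu\|_{\bfQ^{-1}}^2\bigr), \]
\[ T_3 = \log\det(\bfI+\lambda^{-2}\bfH_{\bfQ}) - \log\det(\bfI+\lambda^{-2}\widehat\bfH_{\bfQ}), \]
so that $2(D_\text{KL}-\widehat{D}_\text{KL})=T_1+T_2+T_3$.

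For $T_1+T_3$, I would introduce the scalar function $f(x)=(1+\lambda^{-2}x)^{-1}+\log(1+\lambda^{-2}x)$. Letting $\sigma_1\geq\cdots\geq\sigma_n$ denote the eigenvalues of $\bfH_{\bfQ}$ and $\tau_1\geq\cdots\geq\tau_k$ the Ritz values (eigenvalues of $\bfT_k=\bfB_k\t\bfB_k$), padded by $\tau_i=0$ for $i>k$, we have $T_1+T_3=\sum_{i=1}^n[f(\sigma_i)-f(\tau_i)]$. A direct computation yields $f'(x)=\lambda^{-4}x/(1+\lambda^{-2}x)^2\in[0,\lambda^{-2}]$, and Cauchy interlacing gives $\tau_i\leq\sigma_i$. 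The mean value theorem then produces
\[ 0 \leq T_1+T_3 \leq \lambda^{-2}\sum_{i=1}^n(\sigma_i-\tau_i) = \lambda^{-2}\bigl(\trace\bfH_{\bfQ}-\trace\bfT_k\bigr) = \lambda^{-2}\theta_k, \]
where the last equality is \cref{prop:trecur}.

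For $T_2$, I would use $\spost-\bfmu=\post\bff$ and $\bfs_k-\bfmu=\posth\bff$ with $\bff=\bfA\t\bfR^{-1}\bfb$, noting that $\bff\t\bfQ\bff=\alpha_1^2\beta_1^2$ from the starting relations of \cref{alg:wlsqr}. Writing $\bfN=\lambda^2\bfI+\bfH_{\bfQ}$, $\widehat\bfN=\lambda^2\bfI+\widehat\bfH_{\bfQ}$, and $\tilde\bff=\bfQ^{1/2}\bff$, the factorization $\post=\bfQ^{1/2}\bfN^{-1}\bfQ^{1/2}$ (and the analogous one for $\posth$) reduces $T_2$ to $\lambda^2\tilde\bff\t(\bfN^{-2}-\widehat\bfN^{-2})\tilde\bff$ with $\|\tilde\bff\|_2^2=\alpha_1^2\beta_1^2$. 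I then apply the first-order perturbation identity
\[ \bfN^{-2}-\widehat\bfN^{-2} = -\widehat\bfN^{-1}\bfE\bfN^{-2} - \widehat\bfN^{-2}\bfE\bfN^{-1} \]
with $\bfE=\bfH_{\bfQ}-\widehat\bfH_{\bfQ}$ and $\|\bfE\|_F=\omega_k$, and bound in operator norm while absorbing one of the resolvent factors into the denominator (by the same manipulation that produces the second branch of the min in \cref{thm:post}) to arrive at $|T_2|\leq 2\omega_k(\lambda^2+\omega_k)^{-1}\alpha_1^2\beta_1^2$.

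Combining the two estimates gives $|D_\text{KL}-\widehat{D}_\text{KL}|\leq\tfrac12(T_1+T_3)+\tfrac12|T_2|\leq\lambda^{-2}\bigl[\theta_k+\lambda^2\omega_k(\lambda^2+\omega_k)^{-1}\alpha_1^2\beta_1^2\bigr]$, as claimed. The main obstacle I anticipate is the sharp $T_2$ bound: estimating each resolvent factor independently by $\|\bfN^{-1}\|_2\leq1/\lambda^2$ gives only the looser $O(\omega_k/\lambda^2)$ dependence, and achieving the saturating form $\omega_k/(\lambda^2+\omega_k)$ requires reusing the grouping trick from the proof of \cref{thm:post} that keeps the bound from blowing up when $\omega_k$ is not small relative to $\lambda^2$.
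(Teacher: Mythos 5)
Your proof is correct and follows the same skeleton as the paper's: the identical three-way split of $2(D_\text{KL}-\widehat{D}_\text{KL})$ into trace, quadratic, and log-determinant pieces, with the quadratic piece controlled exactly as you anticipate by the resolvent perturbation bound \cref{eqn:invpert} inherited from \cref{thm:post}. The one genuine difference is your handling of $T_1+T_3$: the paper bounds the trace and log-det terms separately via the first and third inequalities of \cref{lemma:trdet}, each by $\tfrac{\lambda^{-2}}{2}\theta_k$, whereas you fold them into the single increasing function $f(x)=(1+\lambda^{-2}x)^{-1}+\log(1+\lambda^{-2}x)$ of the eigenvalues and invoke Cauchy interlacing of the Ritz values plus $0\le f'\le\lambda^{-2}$. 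This is a clean streamlining that exploits the cancellation between the decreasing trace term and the increasing log-det term, and it actually gains a factor of $2$ on that part of the bound (with $f'\le\lambda^{-2}/4$ you could even gain $8$); the paper's route is more modular only because \cref{lemma:trdet} is reused in \cref{thm:dklacc}. One bookkeeping caveat on $T_2$: carried out literally, your estimate yields $|T_2|\le 2\lambda^{-2}\omega_k(\lambda^2+\omega_k)^{-1}\alpha_1^2\beta_1^2$ rather than the $2\omega_k(\lambda^2+\omega_k)^{-1}\alpha_1^2\beta_1^2$ you claim, since $\|\bfN^{-1}\|_2\le\lambda^{-2}$ and $\|\widehat\bfN^{-1}-\bfN^{-1}\|_2\le\lambda^{-2}\omega_k/(\lambda^2+\omega_k)$ each carry a $\lambda^{-2}$ relative to the normalized resolvents in \cref{eqn:invpert}. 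You are in good company: the paper's own proof of the $\mc{E}_3$ term makes the identical slip by bounding $\|\bfQ^{-1/2}(\spost-\bfmu)\|_2$ by $\alpha_1\beta_1$ when the factorization $\post=\bfQ^{1/2}(\lambda^2\bfI+\bfH_{\bfQ})^{-1}\bfQ^{1/2}$ actually produces $\lambda^{-2}\alpha_1\beta_1$. So your accounting reproduces the paper's stated constant by the same route, and the discrepancy is immaterial to the qualitative content of the theorem (and harmless whenever $\lambda^2\ge1$), but a fully careful derivation along either route gives the quadratic term with an extra factor of $\lambda^{-2}$.
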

\begin{proof}
See \cref{sec:appendix2}.
\end{proof}

Related to the KL divergence is the D-optimal criterion for optimal experimental design, which is defined as
\[ \phi_D \equiv \log\det(\post) - \log\det(\lambda^{-2}\bfQ) = \log\det(\bfI + \lambda^{-2}\bfH_\bfQ).\]
The D-optimal criterion can be seen as the expected KL divergence, with the expectation taken over the posterior distribution. A precise statement of this result was derived in~\cite[Theorem 1]{alexanderian2017efficient}. Similar to the KL divergence, we can estimate the D-optimal criterion as
\[ \widehat\phi_D = \log\det(\bfI + \lambda^{-2}\bfT_k).\]
From the proof of \cref{thm:dkl}, it can be readily seen that a bound for the error in the D-optimal criterion is {given by}
\[ | \phi_D- \widehat\phi_D| \leq \lambda^{-2} \theta_k. \]


\section{Sampling from the posterior distribution}
\label{sec:sampling}
Since the posterior distribution is very high-dimensional, visualizing this distribution is challenging. A popular method is to generate samples from the posterior distribution (also sometimes known as conditional realizations), which provides a family of solutions and can be used for quantifying the reconstruction uncertainty. For instance, to compute the expected value of a  quantity of interest ${q}(\cdot)$, defined as
\[ {{\mathcal{Q}}  \equiv \mb{E}\,[{q}(\bfs) \mid \bfd]= \int_{\bbR^n} {q}(\bfs) \pi(\bfs \mid \bfd) d\bfs}.\]
 Suppose, we have samples {$\{\bfs^{(j)}\}_{j=1}^N$} then ${\mathcal{Q}}_{N} \equiv N^{-1}\sum_{j=1}^{N}{q}(\bfs^{(j)})$ is the  {\em Monte Carlo estimate} of ${\mathcal{Q}}$.
 Furthermore, the Monte Carlo estimate converges to the expected value of the quantity of interest, i.e.,  ${\mathcal{Q} }_{N} \rightarrow {\mathcal{Q} }$ as $N\rightarrow\infty$ {almost surely, by the strong law of large numbers}.

We now show how to draw samples from the posterior distribution $\mc{N}(\spost,\post)$.
As described in \cref{ss_krylov}, if $\bfepsilon \sim \mc{N}(\bf0,\bfI)$ and $\bfS\bfS\t = \post$, then  \[ \bfs = \spost + \bfS\bfepsilon \]
is a sample from $\mc{N}(\spost,\post)$.
However, computing the posterior covariance matrix $\post$ and its factorization $\bfS$ is infeasible for reasons described before.
{We use preconditioned Krylov subspace methods to generate samples from the posterior distribution.}
A direct application of the approach in \cref{ss_krylov} to the posterior covariance matrix is expensive since it involves application of  $\bfQ^{-1}$. To avoid this, we present several reformulations. {The first approach we describe computes a low-rank approximation of $\bfH$ using the gen-GK approach and then uses this low-rank approximation to generate samples from the \emph{approximate} posterior distribution. Any low-rank approximation can be used, provided it is sufficiently accurate. On the other hand, the second approach generates approximate samples from the \emph{exact} posterior distribution. Both methods use a preconditioner, albeit in different ways.

Before describing our proposed methods, we briefly review a few methods for sampling from high-dimensional Gaussian distributions. The idea of using Krylov subspace methods for sampling from Gaussian random processes seems to have originated from~\cite{schneider2003krylov}. Variants of this idea have also been proposed in~\cite{parker2012sampling,chow2014preconditioned} and have found applications in Bayesian inverse problems in~\cite{gilavert2015efficient,simpson2008krylov}. The use of a low-rank surrogate of $\bfH_\bfQ$ has also been explored in~\cite{bui2012extreme,bui2013computational} and is similar to  Method 1 (c.f., \cref{ssec:method1}) that we propose. Other approaches to sampling from the posterior distribution include {randomize-then-optimize} (RTO)~\cite{bardsley2014randomize,bardsley2015randomize} and {randomized MAP} approach~\cite{wang2018randomized}. However, none of these methods can handle the case where $\bfQ^{-1}$ or $\bfQ^{-1/2}$ are not available.

\subsection{Method 1: {Sampling from $\widehat\pi_\text{post}$}}\label{ssec:method1}
{Consider generating samples from $\pi_\text{post}$, where $\post = ({\lambda^2}\bfQ^{-1} + \bfH)^{-1}$ is the posterior covariance matrix.}
Given a preconditioner $\bfG$, which we assume to be invertible, we can write
\[ \bfQ^{-1} = \bfG\t(\bfG\bfQ\bfG\t)^{-1}\bfG. \]
{Then consider the factorization} ${\lambda^2}\bfQ^{-1} = \bfL\t\bfL$ where
\begin{equation}\label{e_L}
\bfL \equiv {\lambda \,} (\bfG\bfQ\bfG\t)^{-1/2}\bfG .
\end{equation}
An important point to note is that, while writing such an factorization, we do not propose to compute it explicitly.  Instead, we access} it in a matrix-free fashion using techniques from {\cref{alg_sampling1}}.

  Plugging this into the expression for the posterior covariance, we obtain
\[ \post = (\bfL\t\bfL + \bfH)^{-1}  = \bfL^{-1}( \bfI +  \bfL^{-\top}\bfH\bfL^{-1})^{-1}\bfL^{-\top}.\]
The low-rank approximation of $\bfH$ in \cref{eqn:lowrank} can be used to derive an approximate factorization of the posterior covariance matrix
\begin{equation}
  \label{eq:Shatdefinition}
  \posth = \widehat\bfS\widehat\bfS\t\mbox{ \quad  where \quad } \widehat\bfS \equiv  \bfL^{-1}( \bfI +  \bfL^{-\top}\widehat\bfH\bfL^{-1})^{-1/2} .
\end{equation}
To efficiently compute matvecs with $\widehat\bfS$, we first compute the low-rank representation
\[ \bfL^{-\top}\widehat\bfH\bfL^{-1} = \bfZ_k\bfTheta_k\bfZ_k\t.\]
Computing the low-rank representation is accomplished using \cref{alg:lowrank}.

\begin{algorithm}[!ht]
\begin{algorithmic}[1]
\ENSURE{[$\bfZ,\bfTheta$] = \texttt{Lowrank}($\bfW$) for {an arbitrary} $\bfW \in \mathbb{R}^{n\times k}$ with $k \leq n$}
\STATE Compute thin-QR factorization $\bfQ\bfR = \bfW$
\STATE Compute eigenvalue decomposition $\bfR\bfR\t = \bfY\bfTheta\bfY\t$
\STATE Compute $\bfZ = \bfQ\bfY$
\end{algorithmic}
\caption{Low-rank representation $\bfZ\bfTheta\bfZ\t=\bfW\bfW\t$}
\label{alg:lowrank}
\end{algorithm}

 Computing matvecs with $\bfL$ (including its inverse and transpose) is done using the preconditioned Lanczos method described in \cref{ss_krylov}.  We can compute the square root of the inverse of $\bfI +\bfZ_k\bfTheta_k\bfZ_k\t$ using a variation of the {Woodbury identity~\cite[Equation (0.7.4.1)]{horn2012matrix}}
\begin{equation*} (\bfI +\bfZ_k\bfTheta_k\bfZ_k\t)^{-1/2} = \bfI -\bfZ_k\bfD_k\bfZ_k\t \qquad  \bfD_k = \bfI_k \pm (\bfI_k+\bfTheta_k)^{-1/2}.\end{equation*}
  {In summary, the procedure for computing samples $\bfxi^{(j)} \sim \calN(\bfzero, \posth)$ is provided}
  in \cref{alg_sampling1}. The accuracy of the generated samples is discussed in \cref{ssec:disc}.
\begin{algorithm}[!ht]
\begin{algorithmic}[1]
\ENSURE{[${\bfxi^{(1)}, \ldots, \bfxi^{(N)}}$] = \texttt{Method1}($\bfA$, $\bfR$, $\bfQ$, {$\bfG$}, $\bfb$, {$N$})}
\STATE {Use genHyBR to get $k$, $\bfs_k, \lambda, \bfV_k, \bfB_k$ (see \cref{ss_genGK})}
\STATE Compute Cholesky factorization $\bfM\t\bfM = \bfB_k\t\bfB_k$
\STATE Apply $\bfL^{-\top}$ to columns of $\bfV_k\bfM\t$ to get $\bfY_k$. \COMMENT{{Application of $\bfL^{-\top}$ is done using approach in \cref{ss_krylov} (see \cref{e_L}, for the definition of $\bfL$).}}
\STATE Compute [$\bfZ_k$, $\bfTheta_k$] = \texttt{Lowrank}($\bfY_k$)
\STATE Compute $\bfD_k = \bfI_k \pm (\bfI_k+\bfTheta_k)^{-1/2}$
\FOR {$j=1,\dots,N$ }
\STATE Draw sample $\bfepsilon^{(j)} \sim \mc{N}(\bfzero,\bfI)$
\STATE Compute $\bfz = \bfepsilon^{(j)} - \bfZ_k\bfD_k\bfZ_k\t\bfepsilon^{(j)}$
\STATE Compute ${\bfxi^{(j)}} = {\bfs_k +} \bfL^{-1}\bfz$
\ENDFOR
\end{algorithmic}
\caption{Method 1: Generates $N$ samples from {$\widehat\pi_\text{post}$}}
\label{alg_sampling1}
\end{algorithm}

\subsection{Method 2: {Sampling from $\pi_\text{post}$}}\label{ssec:method2}
The second approach we describe generates approximate samples from the exact posterior distribution. First, we rewrite the posterior covariance matrix as
{\[ \post = ({\lambda^2}\bfQ^{-1} + \bfH)^{-1} =\bfQ\bfF^{-1}\bfQ \qquad \bfF \equiv {\lambda^2}\bfQ+\bfQ\bfH\bfQ.\]}
We {define
\[  \bfS_{\bfF} \equiv \bfQ\bfF^{-1/2} \]}
such that $\post = \bfS_{\bfF}\bfS_{\bfF}\t.$ In this method, computing a factorization of $\post$ requires computing square roots with $\bfF$. Assume that we have a preconditioner $\bfG$ satisfying $\bfG\bfG\t \approx \bfF^{-1}$. Armed with this preconditioner, we have the following factorization
\[ \post = \bfS_{\bfF}\bfS_{\bfF}\t \qquad  \bfS_{\bfF} \equiv \bfQ \bfG^\top(\bfG\bfF\bfG^\top)^{-1/2}. \]
The application of the matrix $(\bfG\bfF\bfG^\top)^{-1/2}$ to a randomly drawn vector can be accomplished by the Lanczos approach described in \cref{ss_krylov}.

\begin{algorithm}[!ht]
\begin{algorithmic}[1]
  \ENSURE{[${\bfxi}$] = \texttt{Method2}($\bfA$, $\bfR$, $\bfQ$, $\bfG$, {$\bfs_\text{post}$})}
\STATE Draw sample $\bfepsilon \sim \mc{N}(\bfzero,\bfI)$
\STATE Compute $\bfz = \bfG\t(\bfG\bfF\bfG\t)^{-1/2}\bfepsilon$ using Lanczos approach in \cref{ss_krylov}
\STATE Compute ${\bfxi}= {\bfs_\text{post} + }\bfQ\bfz$
\end{algorithmic}
\caption{Method 2: Sampling from {$\pi_\text{post}$}}
\label{alg_sampling2}
\end{algorithm}

As currently described, computing approximate samples from $\post$ requires {computing $\bfs_\text{post}$ and} applying the matrix $\bfA$ and its adjoint $\bfA\t$. However, this may be computationally expensive for several problems of interest. {Here we use $\bfs_k$ as an approximation to $\bfs_\text{post}$.}
A variant of this method, not considered in this paper, follows by replacing the data-misfit part of the Hessian $\bfH$ by its {low-rank} approximation $\widehat\bfH$, defined in \cref{eqn:lowrank}. Define
\[  \widehat\bfF \equiv {\lambda^2}\bfQ+\bfQ\widehat\bfH\bfQ.\]
Therefore, we compute the following factorization of the approximate posterior covariance
\[ \posth = \widehat\bfS_\bfF\widehat\bfS_{\bfF}\t \qquad  \widehat\bfS_{\bfF} \equiv \bfQ \bfG^\top(\bfG\widehat\bfF\bfG^\top)^{-1/2} . \]

\subsection{Discussion}\label{ssec:disc}
We now compare the two proposed methods for generating approximate samples from the posterior. The first approach only uses the forward operator $\bfA$ in the precomputation phase to generate the low-rank approximation and subsequently uses the low-rank approximation as a surrogate. This can be computationally advantageous if the forward operator is very expensive or if many samples are desired. On the other hand, if accuracy is important {or only a few samples are needed,} then the second approach is recommended since it targets the full posterior distribution.

In Method 1, we generate samples from the approximate posterior distribution; the following result quantifies the error in the samples. Define {$\bfS = \bfQ^{1/2}(\lambda^{2}\bfI + \bfH_{\bfQ})^{-1/2}$ such that $\post = \bfS\bfS\t$ and let $\bfepsilon$ be a random draw from $\mathcal{N}(\bfzero,\bfI)$, then
$$\bfs = \spost + \bfS\bfepsilon \quad \mbox{and} \quad \widehat \bfs = \bfs_k + \widehat \bfS\bfepsilon$$
are samples from $\pi_{\rm post}$ and $\widehat \pi_{\rm post}$ respectively, where $\widehat\bfS$ is defined in \cref{eq:Shatdefinition}.}
\begin{theorem}\label{thm:sample}
Let $\posth$ be the approximate posterior covariance matrix generated by running $k$ steps of the gen-GK algorithm.   The error in the sample $\widehat\bfs$ satisfies
\[ {\|\bfs - \widehat\bfs\|_{\lambda^2\bfQ^{-1}} \leq \lambda^{-1}\left(\frac{\omega_k\alpha_1\beta_1}{\lambda^2 + \omega_k} + \sqrt{\frac{\lambda^2\omega_k}{\lambda^2+\omega_k}}\|\bfepsilon\|_{2}\right).} \]
\end{theorem}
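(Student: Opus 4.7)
The plan is to split
\[\bfs - \widehat\bfs = (\spost - \bfs_k) + (\bfS - \widehat\bfS)\bfepsilon,\]
apply the triangle inequality in the $\lambda^2\bfQ^{-1}$-weighted norm, and bound the two pieces separately so that each matches one of the two summands in the claim.

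For the mean-error term $\|\spost - \bfs_k\|_{\lambda^2\bfQ^{-1}}$, I would first use \cref{eqn:postcovmean,eqn:meanhat} to rewrite $\spost - \bfs_k = (\post - \posth)\bfA\t\bfR^{-1}\bfb$, and then invoke the initialization of \cref{alg:wlsqr} to identify $\bfA\t\bfR^{-1}\bfb = \alpha_1\beta_1\bfv_1$. The $\bfQ$-orthonormality in \cref{eq:orthogonality} makes $\bfp_1 := \bfQ^{1/2}\bfv_1$ a unit Euclidean vector. Writing $\post - \posth = \bfQ^{1/2}\bfK\bfQ^{1/2}$ with $\bfK := (\lambda^2\bfI+\bfH_{\bfQ})^{-1} - (\lambda^2\bfI+\widehat\bfH_{\bfQ})^{-1}$, the outer $\bfQ^{1/2}$ cancels the $\bfQ^{-1/2}$ coming from the weight, so
\[\|\spost-\bfs_k\|_{\lambda^2\bfQ^{-1}} = \lambda\alpha_1\beta_1\|\bfK\bfp_1\|_2 \le \lambda\alpha_1\beta_1\|\bfK\|_2.\]
This will match the first summand once I have the perturbation estimate $\|\bfK\|_2 \le \omega_k/(\lambda^2(\lambda^2+\omega_k))$, which I will refer to as $(\star)$.

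For the covariance-error term $\|(\bfS-\widehat\bfS)\bfepsilon\|_{\lambda^2\bfQ^{-1}}$, I would use the $\bfL$-compatible factorization $\bfS = \bfL^{-1}(\bfI+\bfL^{-\top}\bfH\bfL^{-1})^{-1/2}$ (another valid square root of $\post$, which differs from the theorem's $\bfQ^{1/2}(\lambda^2\bfI+\bfH_{\bfQ})^{-1/2}$ only by a right-orthogonal factor that is invisible to the distribution of $\bfS\bfepsilon$) together with $\widehat\bfS$ as in \cref{eq:Shatdefinition}. Because $\bfL\t\bfL = \lambda^2\bfQ^{-1}$ implies $\bfL^{-\top}\bfQ^{-1}\bfL^{-1} = \lambda^{-2}\bfI$, a direct calculation collapses the weighted norm to
\[\|(\bfS-\widehat\bfS)\bfepsilon\|_{\lambda^2\bfQ^{-1}} = \|(A'^{-1/2} - B'^{-1/2})\bfepsilon\|_2,\]
where $A' := \bfI+\bfL^{-\top}\bfH\bfL^{-1}$ and $B' := \bfI+\bfL^{-\top}\widehat\bfH\bfL^{-1}$ both satisfy $A', B' \succeq \bfI$ and $\|A'-B'\|_2 \le \lambda^{-2}\omega_k$. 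The Powers--St\o rmer inequality $\|A'^{-1/2}-B'^{-1/2}\|_2^2 \le \|A'^{-1}-B'^{-1}\|_2$, combined with the analogue of $(\star)$ rescaled with $t=1$ instead of $t=\lambda^2$, yields $\sqrt{\omega_k/(\lambda^2+\omega_k)}\|\bfepsilon\|_2$, which is exactly the second summand after the $\lambda^{-1}$ prefactor.

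The main obstacle is the perturbation bound $(\star)$ itself. The naive estimate from $\bfK = -(\lambda^2\bfI+\bfH_{\bfQ})^{-1}\bfE(\lambda^2\bfI+\widehat\bfH_{\bfQ})^{-1}$ together with $\|\bfE\|_2 \le \|\bfE\|_F = \omega_k$ only delivers $\omega_k/\lambda^4$, which is weaker than $(\star)$ by a factor of $(\lambda^2+\omega_k)/\lambda^2$. To sharpen it, I would use the identity $A^{-1} - (A+\bfE)^{-1} = A^{-1/2}\bfY(\bfI+\bfY)^{-1}A^{-1/2}$ with $\bfY := A^{-1/2}\bfE A^{-1/2}$; for $\bfE \succeq 0$ and $A \succeq \lambda^2\bfI$, $\bfY$ is positive semidefinite, and the monotonicity of $y \mapsto y/(1+y)$ on $[0,\infty)$ gives exactly the desired $\|\bfE\|_2/(\lambda^2(\lambda^2+\|\bfE\|_2))$. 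Since $\bfE = \bfH_{\bfQ} - \widehat\bfH_{\bfQ}$ need not be sign-definite, I would decompose $\bfE = \bfE_+ - \bfE_-$ with $\bfE_\pm \succeq 0$ and apply the preceding estimate to each piece separately, which is the same mechanism underlying the second bound of \cref{thm:post}. Combining these estimates with the reductions above and multiplying through by $\lambda^{-1}$ produces the claimed inequality.
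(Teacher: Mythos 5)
Your decomposition, the treatment of the mean term, and the square-root perturbation step are all essentially the paper's proof. The identification $\bfA\t\bfR^{-1}\bfb=\alpha_1\beta_1\bfv_1$ with $\|\bfQ^{1/2}\bfv_1\|_2=1$, the reduction of both weighted norms to Euclidean norms of differences of functions of $\bfH_{\bfQ}$ and $\widehat\bfH_{\bfQ}$, and the inequality $\|X^{1/2}-Y^{1/2}\|_2\le\|X-Y\|_2^{1/2}$ (which you call Powers--St{\o}rmer; the operator-norm version you actually need is \cite[Theorem X.1.1]{bhatia2013matrix} for $f(x)=\sqrt{x}$, exactly what the paper cites) all match. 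Your choice to keep both $\bfS$ and $\widehat\bfS$ in the $\bfL$-gauge is fine---they differ from the symmetric square roots by a common right orthogonal factor---and is arguably more faithful to \cref{eq:Shatdefinition} than the paper's mixed convention.

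The gap is in your derivation of $(\star)$, on which both summands depend. Your identity $A^{-1}-(A+\bfE)^{-1}=A^{-1/2}\bfY(\bfI+\bfY)^{-1}A^{-1/2}$ delivers the constant $\|\bfE\|_2/\bigl(\lambda^2(\lambda^2+\|\bfE\|_2)\bigr)$ only when $\bfE\succeq 0$, and $\bfE=\bfH_{\bfQ}-\widehat\bfH_{\bfQ}$ is genuinely indefinite: it has the form $\bfH_{\bfQ}-\bfP\bfH_{\bfQ}\bfP$ with $\bfP$ an orthogonal projector, which is not positive semidefinite in general. The repair you propose---split $\bfE=\bfE_+-\bfE_-$ and apply the PSD estimate to each piece---telescopes to a bound of the form $g(\|\bfE_+\|_2)+g(\|\bfE_-\|_2)$ with $g(x)=x/\bigl(\lambda^2(\lambda^2+x)\bigr)$. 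Because $g$ is concave with $g(0)=0$, it is subadditive, so this sum can exceed $g(\omega_k)$: for $\bfE$ with one positive and one negative eigenvalue, each of magnitude $\omega_k/\sqrt{2}$, one gets roughly $\sqrt{2}\,g(\omega_k)$ when $\omega_k$ is small. So your route proves the theorem only up to a constant factor. This is also not ``the same mechanism'' as the second bound of \cref{thm:post}: there the paper invokes \cite[Theorem X.1.1]{bhatia2013matrix} for the operator monotone function $f(x)=x/(1+x)$, namely $\|f(A)-f(B)\|_2\le f(\|A-B\|_2)$ for positive semidefinite $A,B$, which handles the indefinite difference in one step and yields \cref{eqn:invpert}, $\|\bfD\|_2\le\omega_k/(\lambda^2+\omega_k)$. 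Since \cref{eqn:invpert} is already established in the proof of \cref{thm:post}, the clean fix is simply to cite it (and its rescaled analogue for your $A',B'\succeq\bfI$) rather than re-derive it.
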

\begin{proof} See \cref{sec:appendix3}. \end{proof}

\cref{thm:sample} states that if $\omega_k$ is sufficiently small, then the accuracy of the samples is high. The samples, thus generated, can then be used \textit{as is} in applications. Otherwise they can be used as candidate draws from a proposal distribution $\widehat\pi_\text{post}$. This proposal distribution can be used inside an independence sampler, similar to the approach in~\cite{brown2018low}.


\section{Numerical results}
\label{sec:numerics}
In~\cref{subsec:bounds}, we investigate the accuracy of the low-rank approximation to $\bfH$ and the subsequent bounds that were derived in \cref{sec:postapprox}. Then, in~\cref{subsec:sample_ex}, we describe our choice of preconditioners and demonstrate the efficiency of the preconditioned approaches proposed in \cref{sec:sampling} for generating samples from the posterior and approximate posterior.  In the final experiment provided in~\cref{subsec:dynamic}, we demonstrate our methods on a very large dynamic tomography reconstruction problem.

\subsection{Bounds for the posterior covariance matrix}
\label{subsec:bounds}
For this example, we use the \verb|heat| example from the Regularization Toolbox~\cite{hansen1994regularization}. Matrix $\bfA$ was $256\times 256$, and the observations were generated as~\cref{eq:linear problem}, where $\bfdelta$ models the observational error. In the experiments, we take $\bfdelta$ to be $1\%$ additive Gaussian white noise. We let $\bfQ$ be a
$256\times 256$ covariance matrix that was generated using an exponential kernel
$\kappa(r) = \exp(-r/\ell)$ where $r$ is the distance between two points and $\ell = 0.1$ is the correlation length. First, we use gen-HyBR to compute an approximate MAP estimate and simultaneously estimate a good regularization parameter. Using a weighted generalized cross validation (WGCV) method, the computed regularization parameter was $\lambda^2 \approx 5\times 10^3$. The regularization parameter was then fixed for the remainder of the experiment.

\begin{figure}[!ht]\centering
\includegraphics[scale=0.3]{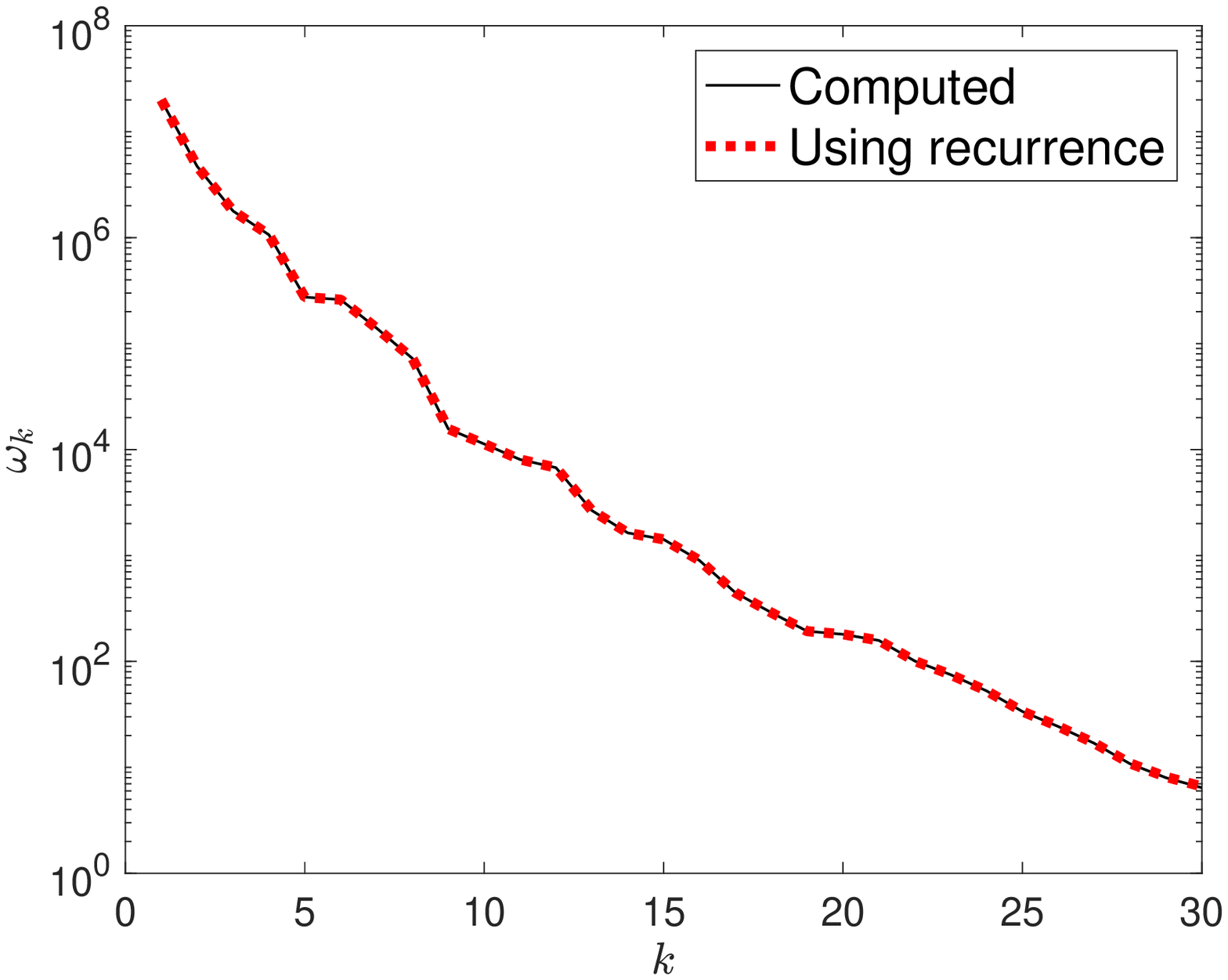}
\includegraphics[scale=0.3]{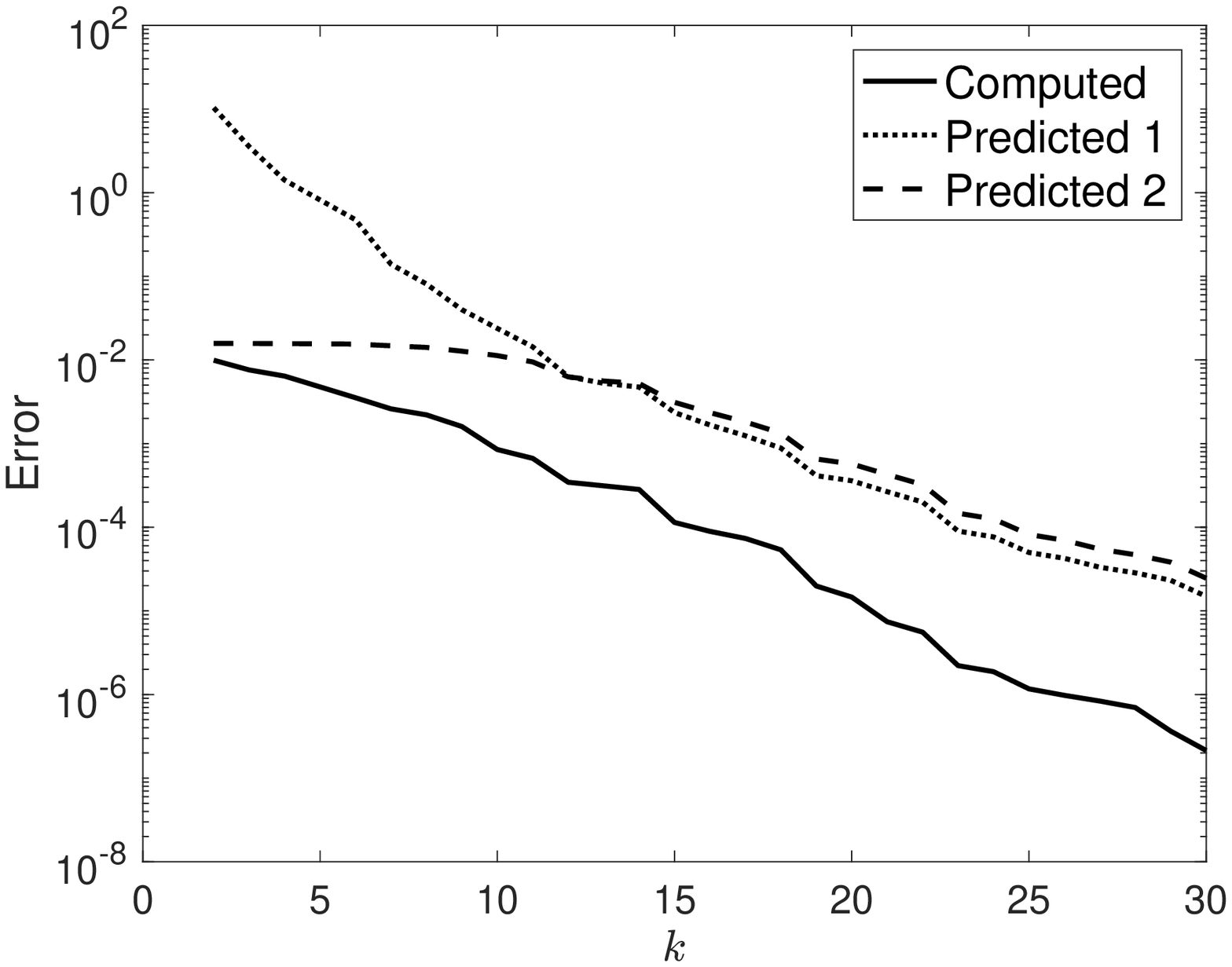}
\caption{The left plot contains computed values of $\omega_k$: the error between the true and the approximate prior-preconditioned Hessian for the data-misfit, as a function of the iteration $k$. The values for $\omega_k$, as computed by the recurrence relationship presented in \cref{prop:recur}, are provided in the dotted line. The right plot contains the errors for the posterior covariance matrix $\|\post - \posth\|_F$ as a function of the iteration, along with the two predicted bounds proposed in \cref{thm:post}.}
\label{fig:postacc}
\end{figure}

\cref{fig:postacc} shows the performance of the derived bounds. In the left plot, we track the accuracy of the prior-preconditioned data-misfit Hessian
$\omega_k = \|\bfH_{\bfQ} - \widehat{\bfH}_{\bfQ}\|_{F}$
 as a function of the number of iterations. The error {shows a sharp decrease with increasing number of iterations $k$}, and $\omega_k$ obtained by recursion is in close agreement with the actual error.
 This plot shows that, even in floating point arithmetic, the recursion relation for $\omega_k$ can be used to monitor the error of $\bfH_{\bfQ}$. The right plot in \cref{fig:postacc} contains the errors in the posterior covariance matrix $\|\post - \posth\|_F$, which also decreases {considerably}. We also provide both of the predicted bounds from \cref{thm:post}.
 While both bounds are qualitatively good, the first bound is slightly better at later iterations,
 whereas the second bound is more informative at earlier iterations. This can be attributed to
 the difference in the behavior of $\omega_k$ in the first bound versus  $\omega_k/(\lambda^2 + \omega_k)$ in the second bound. These plots provide evidence that the low-rank approximation $\widehat\bfH_\bfQ$ constructed using available components from the gen-GK bidiagonalization are quite accurate, and the bounds describing their behavior are informative.

\begin{figure}[!ht]\centering
\includegraphics[scale=0.4]{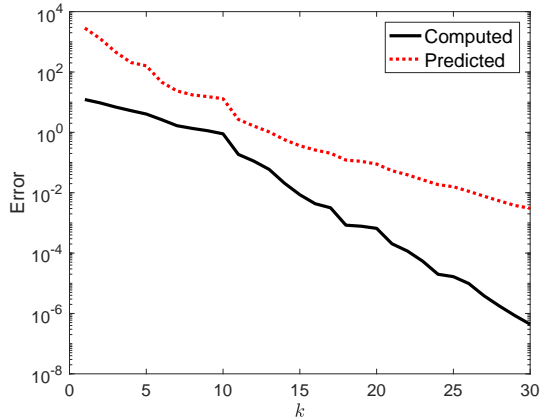}
\caption{This figure provides the computed error in the simplified KL divergence, along with the predicted bound, as a function of the iteration $k$.}
\label{fig:acckl}
\end{figure}

In the next illustration, we use the same problem setup, but we investigate {the} bound
for the KL divergence between the prior and the posterior distribution {\cref{thm:dkl}}.  We found that the bound for the quadratic term $\lambda^2 \, (\spost-\bfmu)\t\bfQ^{-1}(\spost-\bfmu)$ was too pessimistic, which
resulted in a large bound for the KL divergence in \cref{thm:dkl}.  Thus, we consider a simplified expression for the KL divergence,
\[D_\text{KL} = \frac12\left[ \trace ({\lambda^2}\bfQ^{-1}\post) -n - \log\det({\lambda^2}\bfQ^{-1}\post) \right] ,\]
such that the approximation is
\[ \widehat{D}_\text{KL} =  \frac12\left[- \trace ( \bfT_k(\bfT_k + \lambda^2\bfI )^{-1})  + \log\det(\bfI + \lambda^{-2}\bfT_k) \right]. \]
\cref{thm:dkl} then simplifies to $|D_\text{KL} - \widehat{D}_\text{KL}| \leq \lambda^{-2}\theta_k$, where $\theta_k$ is given in \cref{prop:trecur}. The error in the KL divergence is plotted in \cref{fig:acckl}, along with the corresponding bound. We see that that the bound captures the behavior of the KL divergence quite well. As for the quadratic term, we found empirically that the error decreases monotonically and is comparable to the simplified expression for the KL divergence. Even the pessimistic bound of \cref{thm:dkl} suggests that the error eventually decreases to zero with enough iterations. However, a more refined analysis is needed to develop informative bounds for the quadratic term and will be considered in future work. Future work could involve a tighter bound following the approach in \cite{golub2009matrices}.

\subsection{Sampling from the posterior}
\label{subsec:sample_ex}
After describing the choice of preconditioners, we show the performance of these preconditioners within Lanczos approaches for sampling from the prior and the posterior.

\subsubsection{Preconditioners for Mat\'ern Covariance Matrices}\label{ssec:precond}
In this experiment, we investigate preconditioned Lanczos methods described in~\cref{ss_krylov} for sampling from $\calN(\bfzero,\bfQ)$ where $\bfQ$ is defined by a Mat\'ern covariance kernel. We pick three covariance matrices $\bfQ$ corresponding to Mat\'ern parameters $\nu=1/2,\, 3/2,$ and $5/2$; this parameter controls the mean-squared differentiability of the underlying process. For a precise definition of the Mat\'ern covariance function, see~\cite[Equation (1)]{lindgren2011explicit}. The domain is set to $[0,1]^2$, and we choose a $300\times 300$ grid of evenly spaced points; thus, $\bfQ$ is a $90,000 \times 90,000$ matrix. The correlation length $\ell$ is $0.25$.

We use preconditioners of the form $\bfG = (-\bfDelta)^\gamma$ for parameters $\gamma \geq 1$, where $\bfDelta$ is the Laplacian operator discretized using the finite difference operator. These preconditioners are inspired by~\cite{lindgren2011explicit} and exploit the fact that integral operators based on Mat\'ern kernels have inverses that are fractional differential operators. We choose $\gamma = 1/2,\, 1,$ and $2$ corresponding to $\nu = 1/2,\, 3/2,$ and $5/2$ respectively.

\begin{figure}[!ht]\centering
\includegraphics[scale=0.45]{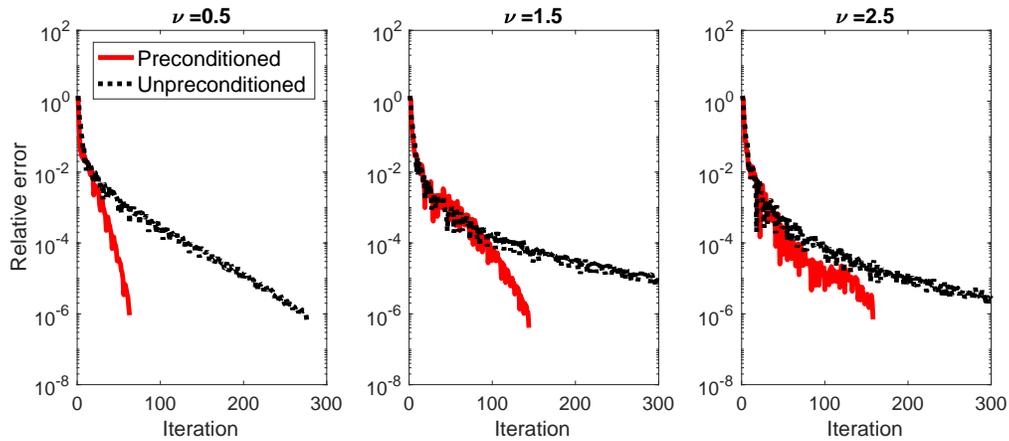}
\caption{Relative errors with and without preconditioning for sampling from $\calN(\bfzero,\bfQ)$. Preconditioners are based on fractional powers of the Laplacian $(-\bfDelta)^\gamma$.  The plots correspond to various choices of $\nu$ in the Mat\'ern covariance kernel and $\gamma$ in the preconditioner. (left) $\nu=1/2$ and $\gamma = 1/2$, (middle) $\nu = 3/2$ and $\gamma = 1$, and (right) $\nu = 5/2$  and $\gamma = 2$.}
\label{fig:maternconv}
\end{figure}

In \cref{fig:maternconv}, we provide the relative errors (computed as $\tilde\e_k$ from \cref{eq:ektilde}) per iteration of the preconditioned and unpreconditioned Lanczos approach. It is readily seen that for $\nu = 1/2$ and $3/2$, including the preconditioner can dramatically speed up the convergence. Some improvement is seen for the case of $\nu =5/2$, but the unpreconditioned solver does not converge within the maximum allotted number of iterations, which was set to $300$. Also, the number of iterations that it takes to converge increases with increasing parameter $\nu$; this is because the systems become more and more ill-conditioned for a fixed grid size. In summary, we see that integral powers of the Laplacian operator can be good preconditioners for sampling from priors with Mat\'ern covariance matrices.  Next we investigate the use of these preconditioners for efficient sampling from the posterior.

\subsubsection{Sampling from the posterior distribution} In this experiment, we use the \verb|PRspherical| test problem from the IRTools toolbox~\cite{gazzola2018ir,hansen2017air}.
The true image $\bfs$ and forward model matrix $\bfA$ that models spherical means tomography are provided.
We use the default settings provided by the toolbox; see~\cite{gazzola2018ir} for details. To simulate measurement error, we add $2\%$ additive Gaussian noise.

For a grid size of $128 \times 128$ and for $\bfQ$ that represents a Mat\'ern kernel with $\nu = 1/2$, we compute the MAP estimate using gen-HyBR and provide the reconstruction in the left panel of \cref{fig:pat2d}. The relative reconstruction error in the 2-norm was $0.0168$, and the regularization parameter determined using WGCV was $\lambda^2 \approx 19.48$. The regularization parameter was fixed for the remainder of this experiment. In \cref{fig:pat2d}, we also show a random draw from the prior distribution $\mathcal{N}(\bfzero,\lambda^{-2}\bfQ)$ in the middle panel and a random draw from the posterior distribution (computed using Method 2 in \cref{ssec:method2}) in the right panel. The same random vector $\bfepsilon \sim \mathcal{N}(\bfzero,\bfI)$ was used for both draws.

\begin{figure}[!ht]\centering
\includegraphics[width = \textwidth]{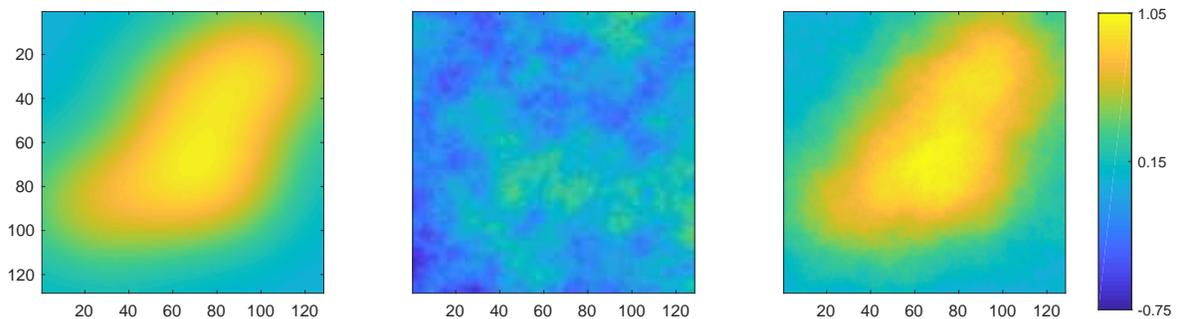}
\cprotect\caption{For the \texttt{PRspherical} problem, we provide the computed MAP estimate (left), a random draw from the prior distribution (middle), and a random draw from the posterior distribution (right).}
\label{fig:pat2d}
\end{figure}
Next we demonstrate the performance of Method 1 in \cref{ssec:method1} for sampling from the approximate posterior distribution $\widehat\pi_\text{post}$ and the performance of Method 2 in \cref{ssec:method2} for sampling from the posterior.  We vary the grid sizes from $16\times 16$ to $256\times 256$, and fix
all other parameters ($\nu = 1/2$, $2\%$ additive Gaussian noise) except the regularization parameter, which was determined for each problem using WGCV. The choice of preconditioners was described in \cref{ssec:precond}.

\begin{table}[!ht]\centering
\caption{For various examples of the \texttt{PRspherical} problem, we compare the performance of Methods 1 and 2 for sampling from the posterior.  The notation $n$ and $m$ is used to denote the number of unknowns and measurements respectively. For Method 1, we provide the number of genHyBR iterations required to compute the MAP estimate ($k$), the number of Lanczos iterations required for Step 3 of \cref{alg_sampling1} (Precompute), and the average number of iterations required for Step 9 of \cref{alg_sampling1} (Sampling). For Method 2, we provide the number of iterations (averaged over $10$ different runs) required for convergence in the preconditioned and unpreconditioned cases. }

\begin{tabular}{|l|c|c|c|c|c|c|}\hline
\multicolumn{2}{|c|}{} & \multicolumn{3}{c|}{Method 1} & \multicolumn{2}{c|}{Method 2} \\ \hline
$n$ & $m$   & $k$ & Precompute & Sampling  & Preconditioned & Unpreconditioned\\ \hline
$16\times 16 $ & $368$ & $52$ & $761$  &  $14.7$ & $22.0$ & $40.5$\\
$32\times 32 $ & $1,440$  & $32$ & $653$ & $21.1$ & $31.4$ & $69.5$\\
$64\times 64 $ & $5,824$  & $27$ & $749$ & $29.6$ & $44.0$ & $120.9$\\
$128\times 128 $ & $23,168$  &$37$ & $1433$ & $42.2$& $61.5$ & $210.1$ \\
$256\times 256 $ & $92,672$  & $63$ &  $3310$ & $60.2$  &  $85.6$ & $366$ \\ \hline
\end{tabular}
\label{tab:prsphericaltomo}
\end{table}

For Method 1, we use the gen-HyBR method to obtain the MAP estimate, the regularization parameter $\lambda^2$, and the low-rank approximation $\widehat\bfH_\bfQ$. In \cref{tab:prsphericaltomo} we report the number of genHyBR iterations as $k$; see \cite{chung2017generalized,chung2008weighted} for details on stopping criteria.
Then, we use \cref{alg_sampling1} to generate samples.
Notice that step 3 of \cref{alg_sampling1} requires the application of $\bfL^{-\top}$ to the low-rank approximation; this is accomplished by using the approach described in \cref{ss_krylov}, coupled with the choice of preconditioner described in \cref{ssec:precond}. The number of Lanczos iterations required for Step 3 is reported in the Precompute column of \cref{tab:prsphericaltomo}. Then, for each sample, step 9 of \cref{alg_sampling1} requires the application of $\bfL^{-1}$, which is also done using a Lanczos iterative process; the number of iterations for this step, averaged over 10 samples, is listed in the final column of \cref{tab:prsphericaltomo}.

For Method 2 we report the {average} number of iterations for the Lanczos solver to converge (i.e., achieving a residual tolerance of $10^{-6}$) with and without a preconditioner in \cref{tab:prsphericaltomo}.
We observe that the number of iterations required to achieve a desired tolerance increases with increasing problem size. This is expected since the number of measurements increases with increasing problem size, and the iterative solver has to work harder to process the additional ``information content.''
We also notice that including the preconditioner cuts the number of iterations roughly in half. For the largest problem we consider here, the unpreconditioned iterative solver required over four times the number of iterations as the preconditioned solver. Since each iteration requires one matvec with $\bfA$ and one with $\bfA\t$, each iteration can be quite expensive; the use of a preconditioner is beneficial in this case. Finally, another important observation is that although the preconditioners proposed in \cref{ssec:precond} were designed for the prior covariance matrix $\bfQ$, here they were used for the matrix $\bfF$ instead; nevertheless, the results in \cref{tab:prsphericaltomo} demonstrate that the preconditioners were similarly effective.

We make a few remarks about the results. First, the precomputation step to generate the low-rank approximation {in Method 1} requires a considerable number of matvecs involving $\bfQ$ but far fewer involving $\bfA$.  Next, the number of iterations required for generating the samples {in Method 1} is, on average, smaller than those reported for Method 2 for comparable problem size. The reason for this is that the preconditioner is designed for $\bfQ$ rather than $\bfF$.

\subsection{Dynamic Tomography Example}
\label{subsec:dynamic}
In this experiment, we consider a dynamic tomography setup where the goal is to reconstruct a sequence of images from a sequence of projection datasets.  {Such scenarios are common in dynamic photoacoustic or dynamic electrical impedance tomography, where the underlying parameters change during the data acquisition process \cite{wang2014fast,schmitt2002efficient2,hahn2014efficient}. Reconstruction is particularly challenging for nonlinear or nonparametric deformations and often requires including a spatiotemporal prior \cite{chung2018efficient,schmitt2002efficient1}.}

For this example, the true images were generated using two Gaussians moving in different directions in the image domain.  We consider a sequence of $20$ images (e.g., time points), where each image is $256\times 256$. In \cref{fig:tomo}, we provide $5$ of the true images.
\begin{figure}[!ht]\centering
\includegraphics[scale=0.75]{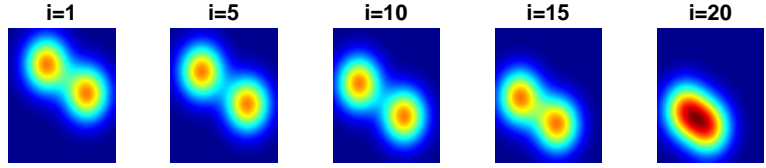}
\caption{$5$ of the $20$ true images for dynamic tomography example are provided.}
\label{fig:tomo}
\end{figure}
We consider a linear problem of the form \cref{eq:linear problem}, where
\begin{equation}
  \bfs = \begin{bmatrix}
    \bfs^{(1)}\\ \vdots \\ \bfs^{(20)}
  \end{bmatrix} \in \bbR^{20*256^2}, \quad
  \bfA = \begin{bmatrix}
    \bfA^{(1)} & & \\
 &\ddots &\\
 &&\bfA^{(20)}
\end{bmatrix}, \quad \mbox{and} \quad
\bfd = \begin{bmatrix}
  \bfd^{(1)}\\ \vdots \\ \bfd^{(20)}
\end{bmatrix},
\end{equation}
where $\bfA^{(i)} \in \bbR^{{18*362} \times 256^2}$
 represents a spherical projection matrix corresponding to $18$ equally spaced angles between $i$ and $340+i$ for $i=1, \ldots, 20,$ and $\bfd^{(i)} \in \bbR^{18*362}$ contains projection data. {To simulate measurement error we add $2\%$ Gaussian noise.}

For the spatiotemporal prior, we let $\bfQ = \bfQ_t \kron \bfQ_s,$ where $\bfQ_t\in \bbR^{20 \times 20}$ and $\bfQ_s\in \bbR^{256^2 \times 256^2}$ correspond to Mat\'{e}rn kernels with $\nu = 2.5, \ell = 0.1$ and $\nu = 0.5, \ell = 0.25$ respectively.  First we use the generalized hybrid approach from \cite{chung2017generalized} to compute an approximation of the MAP estimate and to determine $\lambda$ using WGCV.  In \cref{fig:dynamic_MAP} we provide $5$ of the images from the MAP reconstruction.
\begin{figure}[!ht]\centering
\includegraphics[scale=0.75]{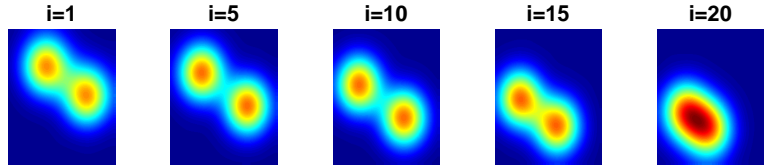}
\caption{Reconstructions from the MAP estimate using genHyBR for the dynamic tomography problem.}
\label{fig:dynamic_MAP}
\end{figure}

Since we can easily obtain a Cholesky factorization of $\bfQ_t^{-1} = \bfG_t\t \bfG_t,$ we define a preconditioner of the form $\bfG = \bfG_t \kron \bfG_s$ where $\bfG_s = (-\Delta)^\gamma$, the exponent $\gamma = 0.5$ and $\kron$ represents the Kronecker product.  Then we use the preconditioned sampling methods described in \cref{sec:sampling} to generate $10$ samples from the prior, the {approximate posterior (Method 1), and the posterior (Method 2)}.  Note that each sample is a $256\times 256\times 20$ volume.  In \cref{fig:dynamic_samples}, we select one sample and provide $5$ slices.
\begin{figure}[!ht]\centering
\includegraphics[scale=0.95]{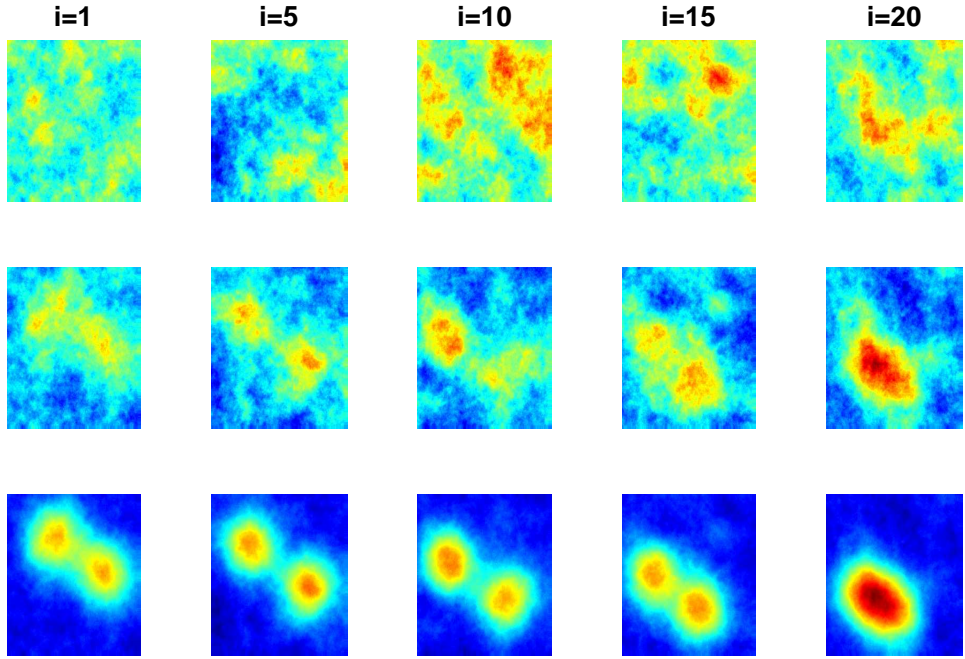}
\caption{A random sample from the prior (first row), {the approximate posterior (middle row), and the posterior (bottom row)} for the dynamic tomography problem.}
\label{fig:dynamic_samples}
\end{figure}

Next we compare CPU timings (in seconds) and number of iterations, averaged over $10$ samples, for both the preconditioned and unpreconditioned versions. In \cref{tab:dynamic_time}, we provide timings and iteration counts in parentheses for generating a sample from the prior, the approximate posterior, and the posterior.  For sampling from the approximate posterior, we also provide the number of Lanczos iterations for precomputation followed by the average number of iterations for sampling (similar to \cref{tab:prsphericaltomo}).  Again, we provide results for various problem dimensions.
\begin{table}[!ht]\centering
\caption{A comparison of CPU timings in seconds ($s$) and iteration counts for the dynamic tomography problem for both the preconditioned and unpreconditioned cases.  Timings are averaged over $10$ samples and average iteration counts are provided in parentheses, with iteration counts separated into precomputation and sampling for the approximate posterior sample.}
\begin{tabular}{l|c|c}
  \hline
$64 \times 64 \times 20$  & Preconditioner (iter)  &  No preconditioner (iter) \\ \hline
Prior Sample  & 1.56 $s$ (31.4) & 297.54 $s$ (500+)  \\ \hline
Approximate Posterior Sample & 8.00 $s$ (1379, 31) & 1013.84 $s$ (16143, 500+) \\ \hline
Posterior Sample  & 15.55 $s$ (104.5) &  312.54 $s$ (500+)\\ \hline
  \multicolumn{3}{c}{ } \\ \hline
$128 \times 128 \times 20$  & Preconditioner (iter)  &  No preconditioner (iter) \\ \hline
Prior Sample  & 15.37 $s$ (52.3) & 1182.51 $s$ (500+)  \\ \hline
Approximate Posterior Sample & 58.02 $s$ (1850, 48.9) & 4984.79 $s$ (18247, 500+) \\ \hline
Posterior Sample  & 140.11 $s$ (158.2) &  1246.74 $s$ (500+)\\ \hline
\multicolumn{3}{c}{ } \\ \hline
$256 \times 256 \times 20$    & Preconditioner (iter)  &  No preconditioner (iter) \\ \hline
Prior Sample  & 137.81 $s$ (75) & 5358.00 $s$ (500+)  \\ \hline
Approximate Posterior Sample & 460.29 $s$ (2381, 75) &  23147.48 $s$ (18494, 500+) \\ \hline
Posterior Sample  & 1255.31 $s$ (238.4) &  5563.96 $s$ (500+)\\ \hline
\end{tabular}
\label{tab:dynamic_time}
\end{table}
We remark that sampling from the approximate posterior requires an upfront cost from precomputation, but if many samples are required, that cost can be amortized.  On the other hand, if we need only a few, more accurate samples, then sampling from the true posterior may be more efficient. We also observe that the use of a preconditioner significantly cuts the number of required iterations. Indeed, none of the unpreconditioned iterative solvers considered for this example converged within the maximum number of iterations taken to be $500$.


\section{Conclusions}
\label{sec:conclusions}

This paper considers the challenging problem of providing an efficient representation for the posterior covariance matrix arising in high-dimensional inverse problems. To this end, we propose an approximation to the posterior covariance matrix as a low-rank perturbation of the prior covariance matrix. The approximation is computed using information from the
{gen-GK bidiagonalization}
generated while computing the MAP estimate. As a result, we obtain an approximate and efficient representation for ``free.'' Several results are presented to quantify the accuracy of this representation and of the resulting posterior distribution. We also show how to efficiently compute measures of uncertainty involving the posterior distribution. Then we present two variants that utilize a preconditioned Lanczos solver to efficiently generate samples from the posterior distribution. The first approach generates samples from an approximate posterior distribution, whereas the second approach generates samples from the exact posterior distribution. The approximate samples can be used \textit{as is} or as candidate draws from a proposal distribution that closely approximates the exact posterior distribution.

There are several avenues for further research. The first important question is: Can we replace the bounds in the Frobenius norm by the spectral norm? The reason we employed the Frobenius norm is because of the recurrence relation in \cref{prop:recur}. Another issue worth exploring is if we can give bounds for the error in the low-rank approximation $\omega_k$ explicitly in terms of the eigenvalues of $\bfH_{\bfQ}$. This can be beneficial for deciding \text{a priori} the number of iterations required for an accurate low-rank approximation when the rate of decay of eigenvalues of $\bfH_{\bfQ}$ is known. Finally, we are interested in exploring {the use of} the approximate  posterior distribution as a surrogate for the exact posterior distribution inside a Markov Chain Monte Carlo (MCMC) sampler. This is of particular interest for nonlinear problems where the posterior distribution is non-Gaussian.  MCMC methods rely heavily on the availability of a good proposal distribution. One approach is to linearize the forward operator about the MAP estimate (the so-called Laplace's approximation) resulting in a Gaussian distribution with similar structure to $\pi_\text{post}$. This approximation to the true posterior distribution can be used as a proposal distribution, see for e.g.~\cite{martin2012stochastic,petra2014computational}.


\section{Acknowledgements} This work was partially supported by NSF DMS 1720398 (A. Saibaba and K. Petroske), NSF DMS 1654175 (J.~Chung), and NSF DMS 1723005 (J.~Chung). The authors would like to thank Silvia Gazzola, Per Christian Hansen, and James Nagy for generously sharing an advanced copy of their preprint~\cite{gazzola2018ir} and code that we used in our numerical experiments.


\appendix
\section{Proofs}
\subsection{Derivation of \cref{eqn:meanhat}}
\label{sec:appendix4}
{
First, we plug in $\posth= (\lambda^2 \bfQ^{-1} + \widehat\bfH)^{-1}$ and rearrange to get
\[\begin{aligned}
  \posth \bfA\t\bfR^{-1}\bfb & = (\lambda^2 \bfQ^{-1} + \bfV_k \bfT_k\bfV_k\t)^{-1}\bfA\t \bfR^{-1}\bfb\\
  & = (\lambda^2 \bfI+ \bfQ \bfV_k \bfT_k\bfV_k\t)^{-1} \bfQ \bfA\t \bfR^{-1}\bfb\,.
\end{aligned}\]
Then, using the gen-GK relationships, we note that $$\bfA\t \bfR^{-1}\bfb = \bfA\t \bfR^{-1} \bfU_{k+1}\beta_1 \bfe_1= \bfV_k \bfB_k\t \beta_1 \bfe_1\,. $$
Furthermore, using the {Woodbury formula~\cite[Equation (0.7.4.1)]{horn2012matrix}}, we have
$$ (\lambda^2 \bfI+ \bfQ \bfV_k \bfT_k\bfV_k\t)^{-1} = \lambda^{-2} \bfI - \lambda^{-4} \bfQ \bfV_k (\bfT_k^{-1} + \lambda^{-2} \bfI)^{-1}\bfV_k\t\,.$$
Thus, we get
\[\begin{aligned}
  \posth \bfA\t\bfR^{-1}\bfb & = \left(\lambda^{-2} \bfI - \lambda^{-4} \bfQ \bfV_k \left(\bfT_k^{-1} + \lambda^{-2} \bfI\right)^{-1}\bfV_k\t\right) \bfQ \bfV_k \bfB_k\t \beta_1 \bfe_1\\
  & = \bfQ \bfV_k \left(\lambda^{-2} \bfI - \lambda^{-4} \left(\bfT_k^{-1} + \lambda^{-2} \bfI \right)^{-1} \right) \bfB_k\t \beta_1 \bfe_1 \\
  & = \bfQ \bfV_k (\bfT_k + \lambda^{2} \bfI)^{-1} \bfB_k\t \beta_1 \bfe_1\,,
\end{aligned}\]
where the last equality uses the fact that
$(\bfT_k^{-1} + \lambda^{-2} \bfI)^{-1}  = \lambda^2 \bfI - \lambda^4 (\bfT_k + \lambda^{2} \bfI)^{-1}.$
Since $\bfT_k = \bfB_k\t\bfB_k,$ we have the desired result.
}

\subsection{Proofs {for} \cref{ssec:postacc}}
\label{sec:appendix1}
\begin{proof}[\cref{prop:recur}]
First, we recognize that
{$\widehat\bfH =  \bfV_k\bfT_k\bfV_k\t$, where $\bfT_k = \bfV_k\t\bfQ\bfH\bfQ\bfV_k$ is a tridiagonal matrix of the form}
\[\bfT_k = \begin{bmatrix}\mu_1 & \nu_2 \\ \nu_2 & \mu_2 & \nu_3 \\ & \ddots & \ddots & \ddots \\ & & \nu_{k-1} & \mu_{k-1} & \nu_k \\ & && \nu_k & \mu_k \end{bmatrix}, \]
where $\mu_j = \alpha_j^2 + \beta_{j+1}^2$ and $\nu_j = \alpha_j\beta_j$ for $j=1,\dots,k$.

For simplicity denote $\widehat{\bfV}_k = \bfQ^{1/2}\bfV_k$ and note that the columns of $\widehat{\bfV}_k$ are orthonormal. Then write
\[{\bfH_\bfQ - \widehat\bfH_\bfQ = }
 (\bfI - \widehat\bfV_k{\widehat\bfV_k\t})\bfH_{\bfQ} +  \widehat\bfV_k\widehat\bfV_k\t \bfH_{\bfQ}(\bfI - \widehat\bfV_k\widehat\bfV_k\t).\]
The observation that $ (\bfI - \widehat\bfV_k\widehat\bfV_k\t)\bfH_{\bfQ} \perp \widehat\bfV_k\widehat\bfV_k\t\bfH_{\bfQ}(\bfI - \widehat\bfV_k\widehat\bfV_k\t)$ with respect to the trace inner product, it is easy to show that
\[ \omega_k^2 = \| (\bfI - \widehat\bfV_k\widehat\bfV_k\t)\bfH_{\bfQ} \|_{F}^2 +\| \widehat\bfV_k\widehat\bfV_k\t \bfH_{\bfQ}(\bfI - \widehat\bfV_k\widehat\bfV_k\t)\|_{F}^2. \]
The second term is easy since using the gen-GK relationships, we have
	\[ \widehat\bfV_k\widehat\bfV_k\t \bfH_{\bfQ}(\bfI - \widehat\bfV_k\widehat\bfV_k\t) = \alpha_{k+1}\beta_{k+1} \widehat\bfv_k\widehat\bfv_{k+1}\t,\]
	and thus
	$\|\alpha_{k+1}\beta_{k+1} \widehat\bfv_k\widehat\bfv_{k+1}\t\|_{F}^2  = |\alpha_{k+1}\beta_{k+1}|^2$. For the first term, we denote $\eta_k = \|(\bfI - \widehat\bfV_k\widehat\bfV_k\t)\bfH_{\bfQ}\|_F$, so that
\begin{equation}\label{eqn:inter1}
\omega_{k}^2 = \eta_k^2 + |\alpha_{k+1}\beta_{k+1}|^2.
\end{equation}
Then write $\bfI - \widehat\bfV_k\widehat\bfV_k\t =\bfI - \widehat\bfV_{k+1}\widehat\bfV_{k+1}\t  + \widehat\bfv_{k+1}\widehat\bfv_{k+1}\t $ and again apply Pythagoras' theorem to get
\[ \eta_k^2 = \eta_{k+1}^2 + \| \widehat\bfv_{k+1}\widehat\bfv_{k+1}\t\bfH_{\bfQ}\|_F^2.\]
From the {gen-GK} relations, it can be verified that
\begin{equation}\label{eqn:inter2}
\begin{aligned}\bfH_{\bfQ}\widehat\bfv_{k+1}\widehat\bfv_{k+1}\t = & \> \alpha_{k+1}\beta_{k+1}\widehat\bfv_k\widehat\bfv_{k+1}\t + (\alpha_{k+1}^2 + \beta_{k+2}^2) \widehat\bfv_{k+1}\widehat\bfv_{k+1}\t\\
& \> \qquad \qquad +\alpha_{k+2}\beta_{k+2}\widehat\bfv_{k+2}\widehat\bfv_{k+1}\t.
\end{aligned}
\end{equation}
Since each term is mutually orthogonal, this implies
\[ \eta_k^2 = \eta_{k+1}^2 + |\alpha_{k+1}\beta_{k+1}|^2 + |\alpha_{k+1}^2 + \beta_{k+2}^2|^2 + |\alpha_{k+2}\beta_{k+2}|^2.\]
Together with \cref{eqn:inter1}, {we get the desired recurrence.}
\end{proof}

\begin{proof}[\cref{thm:post}]

We now consider the error in the posterior covariance matrix. For the first bound, using
$$\post = \bfQ^{1/2}(\lambda^2\bfI + \bfH_{\bfQ})^{-1}\bfQ^{1/2}\,,
 $$ we have
\[\begin{aligned}
  \|\post - \posth\|_F & \leq  \|\bfQ\|_2 \|(\lambda^{2}\bfI + \bfH_{\bfQ})^{-1} - (\lambda^{2}\bfI + \widehat\bfH_{\bfQ})^{-1}\|_F\\
  & = \lambda^{-2} \|\bfQ\|_2 \|(\bfI + \lambda^{-2}\bfH_{\bfQ})^{-1} - (\bfI +\lambda^{-2} \widehat\bfH_{\bfQ})^{-1}\|_F.
\end{aligned}\]
With $f(x) = x/(1+x)$, it is verifiable that
\[  (\bfI + \lambda^{-2}\bfH_{\bfQ})^{-1} - (\bfI + \lambda^{-2}\widehat\bfH_{\bfQ})^{-1} = f( \lambda^{-2}\widehat\bfH_{\bfQ}) - f(\lambda^{-2}\bfH_{\bfQ}).\]
The function $f$ is operator monotone~\cite[Proposition V.1.6]{bhatia2013matrix} and satisfies $f(0) = 0$. Since both $\lambda^{-2}\bfH_{\bfQ}$ and $\lambda^{-2}\widehat\bfH_{\bfQ}$ are positive semi-definite, using~\cite[Theorem X.1.3]{bhatia2013matrix}, we obtain
\[   \|(\bfI + \lambda^{-2}\bfH_{\bfQ})^{-1} - (\bfI + \lambda^{-2}\widehat\bfH_{\bfQ})^{-1}\|_F \leq \| |\bfE| (\bfI + |\bfE|)^{-1}\|_F,\]
where we let $\bfE = \lambda^{-2}(\bfH_{\bfQ} - \widehat\bfH_{\bfQ})$, and $|\bfE | = (\bfE^*\bfE)^{1/2}. $ Note that both $|\bfE|$ and $\bfE$ have the same singular values, so $\||\bfE|\|_F = \|\bfE\|_F$. Since $|\bfE|$ is positive semi-definite,  the singular values of $ (\bfI+|\bfE|)^{-1}$ are at most $1$. By submultiplicativity inequality and $\||\bfE|\|_F = \|\bfE\|_F$, we have
\begin{equation}
  \label{appendix:bound}
   \|(\lambda^{2}\bfI + \bfH_{\bfQ})^{-1} - (\lambda^{2}\bfI + \widehat\bfH_{\bfQ})^{-1}\|_F \leq   \| \lambda^{-2} (\bfH_\bfQ- \widehat\bfH_\bfQ) \|_F = \lambda^{-2} \omega_k
\end{equation}
and hence the desired result:
\begin{equation}
  \|\post - \posth\|_F  \leq  \lambda^{-2} \|\bfQ\|_2 \|\lambda^{-2} (\bfH_\bfQ - \widehat \bfH_\bfQ)\|_F = \lambda^{-4} \omega_k  \|\bfQ\|_2.
\end{equation}
For the second bound, we reserve the use of spectral and Frobenius norms
\[
  \|\post - \posth\|_F  \leq  \lambda^{-2} \|\bfQ\|_F \|(\bfI + \lambda^{-2}\bfH_{\bfQ})^{-1} - (\bfI + \lambda^{-2}\widehat\bfH_{\bfQ})^{-1}\|_2.\]
  Again, let $\bfE = \lambda^{-2}(\bfH_{\bfQ} - \widehat\bfH_{\bfQ})$, and use~\cite[Theoerem X.1.1]{bhatia2013matrix} with $f(x) = x/(1+x)$, to obtain
\[   \|(\bfI + \lambda^{-2}\bfH_{\bfQ})^{-1} - (\bfI + \lambda^{-2}\widehat\bfH_{\bfQ})^{-1}\|_2 \leq \frac{\|\bfE\|_2}{1 + \|\bfE\|_2}.\]
It is readily verified that if $0 \leq a \leq b$, then ${a}(1 + a)^{-1} \leq {b}(1+b)^{-1}$, and so
\begin{equation}\label{eqn:invpert} \|(\bfI + \lambda^{-2}\bfH_{\bfQ})^{-1} - (\bfI + \lambda^{-2}\widehat\bfH_{\bfQ})^{-1}\|_2 \leq \frac{\|\bfE\|_2}{1 + \|\bfE\|_2}\leq \frac{\|\bfE\|_F}{1 + \|\bfE\|_F} = \frac{\omega_k}{\lambda^2 + \omega_k}.\end{equation}
The recognition that $\|\bfE\|_F = \lambda^{-2}\omega_k$ completes the proof.
\end{proof}

\subsection{Lemma of independent interest}
We will need the following lemma to prove \cref{thm:dklacc,thm:dkl}. This may be of independent interest beyond this paper.
\begin{lemma}\label{lemma:trdet} Let $\bfA \in \mb{R}^{n\times n}$ be symmetric positive semidefinite
  and let $\bfP\in \mb{R}^{n\times n}$ be an orthogonal projection matrix. Then the following results hold
\[\begin{aligned}
|\trace(\bfI + \bfA )^{-1} -\trace(\bfI + \bfP\bfA\bfP)^{-1}| \leq & \> \trace(\bfA-\bfP\bfA\bfP),\\
 \trace\left[(\bfI + \bfA)(\bfI + \bfP\bfA\bfP)^{-1}\right] \leq & \> n + \trace(\bfA-\bfP\bfA\bfP) \\
 0 \leq \log\det(\bfI + \bfA) -\log\det(\bfI + \bfP\bfA\bfP)  \leq & \> \trace(\bfA-\bfP\bfA\bfP).
\end{aligned}\]
\end{lemma}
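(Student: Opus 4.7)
The plan is to reduce the lemma to a block form by choosing an orthonormal basis in which $\bfP = \operatorname{diag}(\bfI_r, \bfzero)$, where $r = \rank{\bfP}$. Writing $\bfA = \begin{bmatrix} \bfA_{11} & \bfA_{12} \\ \bfA_{12}\t & \bfA_{22} \end{bmatrix}$ accordingly, we have $\bfP\bfA\bfP = \operatorname{diag}(\bfA_{11}, \bfzero)$, the diagonal blocks $\bfA_{11}, \bfA_{22}$ are PSD, and $\trace(\bfA - \bfP\bfA\bfP) = \trace(\bfA_{22})$. I will introduce $\bfM_{11} \equiv \bfI_r + \bfA_{11} \succeq \bfI_r$ and the Schur complement
\[ \bfS \equiv \bfI_{n-r} + \bfA_{22} - \bfA_{12}\t \bfM_{11}^{-1} \bfA_{12}. \]
The workhorse is the two-sided bound $\bfI_{n-r} \preceq \bfS \preceq \bfI_{n-r} + \bfA_{22}$: the upper bound is immediate since $\bfA_{12}\t\bfM_{11}^{-1}\bfA_{12}$ is PSD, while the lower bound follows from the observation that $\bfP + \bfA$ is PSD (as a sum of PSD matrices), so its Schur complement with respect to $\bfM_{11}$, namely $\bfS - \bfI_{n-r}$, is PSD.

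With these preliminaries, the second inequality is a one-line calculation: direct block multiplication yields that the diagonal blocks of $(\bfI+\bfA)(\bfI+\bfP\bfA\bfP)^{-1}$ are $\bfI_r$ and $\bfI_{n-r} + \bfA_{22}$, whose traces sum to $n + \trace(\bfA_{22})$, so in fact equality holds. For the third inequality, the Schur determinant identity gives $\det(\bfI+\bfA) = \det(\bfM_{11})\det(\bfS)$ and $\det(\bfI+\bfP\bfA\bfP) = \det(\bfM_{11})$, so the logarithmic difference simplifies to $\log\det \bfS$. Nonnegativity is immediate from $\bfS \succeq \bfI$; the upper bound uses the scalar inequality $\log(1+x) \leq x$ on eigenvalues to get $\log\det\bfS \leq \trace(\bfS - \bfI_{n-r}) = \trace(\bfA_{22}) - \trace(\bfA_{12}\t\bfM_{11}^{-1}\bfA_{12}) \leq \trace(\bfA_{22})$.

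The first inequality is the main obstacle because the trace difference can have either sign, so we must bound its absolute value. Using the $2\times 2$ block-inversion formula,
\[ \trace(\bfI + \bfP\bfA\bfP)^{-1} - \trace(\bfI + \bfA)^{-1} = \trace\bigl(\bfI_{n-r} - \bfS^{-1}\bigr) - \trace\bigl(\bfS^{-1}\bfK\bigr), \]
where $\bfK \equiv \bfA_{12}\t \bfM_{11}^{-2}\bfA_{12} \succeq 0$. Both terms on the right are nonnegative ($\bfS^{-1}\preceq \bfI$ and the trace of a product of PSDs is nonnegative), so the absolute value of the left-hand side is bounded by the maximum of the two. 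For the first term, $\bfS \preceq \bfI + \bfA_{22}$ implies $\bfI - \bfS^{-1} \preceq \bfA_{22}(\bfI+\bfA_{22})^{-1} \preceq \bfA_{22}$; for the second, $\bfS^{-1}\preceq \bfI$ together with $\bfM_{11}^{-2}\preceq \bfM_{11}^{-1}$ (since $\bfM_{11} \succeq \bfI$) yield $\trace(\bfS^{-1}\bfK) \leq \trace(\bfA_{12}\t\bfM_{11}^{-1}\bfA_{12}) = \trace(\bfA_{22}) - \trace(\bfS - \bfI_{n-r}) \leq \trace(\bfA_{22})$. Both bounds equal $\trace(\bfA - \bfP\bfA\bfP)$, completing the proof.
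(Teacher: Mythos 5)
Your proof is correct, and it takes a genuinely different route from the paper's. The paper argues spectrally and in a norm-free way: it orders the eigenvalues $\lambda_i$ of $\bfA$ and $\mu_i$ of $\bfP\bfA\bfP$, establishes $\lambda_i \geq \mu_i$ via multiplicative singular value inequalities, and then handles the three claims with scalar estimates, the von Neumann trace theorem, Sylvester's determinant identity, and a cited log-determinant lemma together with the Loewner ordering $\bfA^{1/2}\bfP\bfA^{1/2} \preceq \bfA$. You instead diagonalize the projector, $\bfP = \diag(\bfI_r, \bfzero)$, and derive everything from the single two-sided bound $\bfI_{n-r} \preceq \bfS \preceq \bfI_{n-r} + \bfA_{22}$ on the Schur complement $\bfS$ of $\bfI_r + \bfA_{11}$ in $\bfI + \bfA$; each step there checks out (the lower bound as the Schur complement of the PSD matrix $\bfP + \bfA$, the block inversion identity, the monotonicity facts $\bfS^{-1} \preceq \bfI$, $\bfM_{11}^{-2} \preceq \bfM_{11}^{-1}$, and the max-of-two-nonnegatives bound on the absolute value). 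What your approach buys is that it is entirely self-contained — no appeal to majorization results or external lemmas — and it is sharper: it reveals that the second claim is in fact an \emph{equality}, $\trace[(\bfI+\bfA)(\bfI+\bfP\bfA\bfP)^{-1}] = n + \trace(\bfA - \bfP\bfA\bfP)$, and for the third it gives the refined bound $\log\det\bfS \leq \trace(\bfA_{22}) - \trace(\bfA_{12}\t\bfM_{11}^{-1}\bfA_{12})$. What the paper's route buys is brevity and basis-independence: given the cited machinery, each claim is a two-line eigenvalue computation with no block bookkeeping, and the same eigenvalue comparison $\lambda_i \geq \mu_i$ drives all three parts uniformly. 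Only cosmetic caveat: you may want to note explicitly that all quantities are invariant under the orthogonal change of basis that puts $\bfP$ in the form $\diag(\bfI_r,\bfzero)$, and that the degenerate cases $r=0$ and $r=n$ are trivial.
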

\begin{proof}
Let $\{ \lambda_i \}_{i=1}^n$ and $\{\mu_i\}_{i=1}^n$ denote the eigenvalues of $\bfA$ and $\bfP\bfA\bfP$. Since both matrices are positive semidefinite, their eigenvalues are non-negative. Since $\bfP$ is a projection matrix, {its} singular values are at most $1$. The multiplicative singular value inequalities~\cite[Problem III.6.2]{bhatia2013matrix} say $\sigma_i(\bfP\bfA^{1/2}) \leq \sigma_i(\bfA^{1/2})$, so $\lambda_i \geq \mu_i$ for $i=1,\dots,n$, and therefore, $\trace(\bfA) \geq \trace(\bfP\bfA\bfP)$.
Then for the first inequality
\begin{align*}
 | \trace(\bfI + \bfP\bfA\bfP)^{-1} - \trace(\bfI + \bfA )^{-1}| = & \>  \left|\sum_{i=1}^n \frac{\lambda_i-\mu_i}{(1+\mu_i)(1+\lambda_i)} \right| \\
\leq & \> \left|\sum_{i=1}^n (\lambda_i-\mu_i)\right| = |\trace(\bfA -\bfP\bfA\bfP)|.
\end{align*}
The inequalities follow since $\lambda_i,\mu_i$ are nonegative. The absolute value disappears since $\trace(\bfA) \geq \trace(\bfP\bfA\bfP)$.

For the second inequality, write
\[ (\bfI + \bfA)(\bfI+\bfP\bfA\bfP)^{-1}  = \bfA (\bfI+\bfP\bfA\bfP)^{-1}-\bfP\bfA\bfP(\bfI+\bfP\bfA\bfP)^{-1} + \bfI.\]
Both $\bfA$ and $(\bfI+\bfP\bfA\bfP)^{-1}$ are positive semidefinite (the second matrix is definite), so the trace of their product is nonnegative~\cite[Exercise 7.2.26]{horn2012matrix}. Then a straightforward application of the von Neumann trace theorem~\cite[Theorem 7.4.1.1]{horn2012matrix} {leads to}
\[ \trace(\bfA(\bfI+\bfP\bfA\bfP)^{-1}) \leq \sum_{i=1}^n \frac{\lambda_i}{1+\mu_i}. \]
By utilizing its eigendecomposition, we see that $\trace[\bfP\bfA\bfP(\bfI+\bfP\bfA\bfP)^{-1}] = \sum_{i=1}^n \frac{\mu_i}{1+\mu_i}$. Putting it together{, we get}
\[\begin{aligned}
\trace[(\bfI + \bfA)(\bfI+\bfP\bfA\bfP)^{-1}] \leq &  \> n  + \sum_{i=1}^n \left( \frac{\lambda_i}{1+\mu_i} - \frac{\mu_i}{1+\mu_i} \right) \\
\leq & \> n + \sum_{i=1}^n\frac{\lambda_i-\mu_i}{1+\mu_i} \> \leq  \> n + \sum_{i=1}^n(\lambda_i-\mu_i).
\end{aligned}\]
Connecting the sum of the eigenvalues with the trace delivers the desired result.

For the third inequality, use Sylvester's determinant {identity~\cite[Corollary 2.11]{ouellette1981schur}} to write
\[ \log\det(\bfI + \bfP\bfA\bfP) = \log\det(\bfI + \bfA^{1/2}\bfP\bfA^{1/2}). \]
{Denote $\bfB = \bfA^{1/2}\bfP\bfA^{1/2}$ and introduce the notation of Loewner partial ordering~\cite[Section 7.7]{horn2012matrix}. Let $\bfM,\bfN \in \mb{R}^{n\times n}$ be symmetric. Then, $\bfM \preceq \bfN$ means $\bfN-\bfM$ is positive semidefinite. Since $\bfP \preceq \bfI$, it follows that $\bfB \preceq \bfA$~\cite[Theorem 7.7.2]{horn2012matrix}.} Then apply~\cite[Lemma 9]{alexanderian2017efficient}, to obtain
\[ 0 \leq \log\det(\bfI + \bfA) -\log\det(\bfI + \bfB) \leq \log\det(\bfI + \bfA- \bfB).\]

Finally since $\log(1+x) \leq x$ for $x \geq 0$, $\log\det(\bfI + \bfA- \bfB) \leq \trace( \bfA- \bfB)$. The proof is {completed by observing that} $\trace(\bfB) = \trace(\bfP\bfA\bfP)$ by the cyclic property of trace.
\end{proof}

\subsection{Proofs of \cref{ssec:acc} and \cref{ssec:info}}
\label{sec:appendix2}

\begin{proof}[\cref{prop:trecur}]
The linearity and cyclic property of trace estimator implies
\[ \theta_k = \trace( (\bfI- \widehat\bfV_k\widehat\bfV_k\t)\bfH_{\bfQ} ). \]
As in the proof of Proposition~\ref{prop:recur}, write $\bfI - \widehat\bfV_k\widehat\bfV_k\t =\bfI - \widehat\bfV_{k+1}\widehat\bfV_{k+1}\t  + \widehat\bfv_{k+1}\widehat\bfv_{k+1}\t $, so that
\[ \theta_k = \theta_{k+1} + \trace(\widehat\bfv_{k+1}\widehat\bfv_{k+1}\t\bfH_{\bfQ}).\]
The proof is finished if we apply the trace to the right hand side of \cref{eqn:inter2}.
\end{proof}

\begin{proof}[\cref{thm:dklacc}]
The lower bound follows from the property of the KL divergence and the fact that the distributions are not degenerate. The proof for the upper bound begins by providing an alternate expression for the error in the KL divergence.
\[ D_{KL}(\widehat\pi_\text{post} \| \pi_\text{post}) = \frac12 \left[\mc{E}_1 + \mc{E}_2 + \mc{E}_3\right],\]
where $\mc{E}_1 =   \trace(\posth\post^{-1}) - n$,
\[
\mc{E}_2=  \log\det(\post) - \log\det(\posth),  \quad \text{and} \quad
\mc{E}_3 =  \| \spost -{\bfs_k}\|_{\post^{-1}}^2.
\]
We tackle each term individually. The second term $\mc{E}_2$ simplifies since
\[\log\det(\post) - \log\det(\posth)  = \log\det(\bfI + \lambda^{-2} \widehat\bfH_\bfQ) - \log\det(\bfI + \lambda^{-2} \bfH_\bfQ). \]
Let $\bfM = \lambda^{-2}(\bfH_{\bfQ}) $, then with $\bfP = \widehat\bfV_k\widehat\bfV_k\t$ we have $\lambda^{-2}\widehat\bfH_\bfQ = \bfP\bfM\bfP$. Apply the third inequality in \cref{lemma:trdet} to conclude $\mc{E}_2 \leq 0$. For the first term $\mc{E}_1$, apply the second part of \cref{lemma:trdet} to obtain
\[\begin{aligned}
\trace(\posth\post^{-1}) = & \> \trace[(\bfI+ \lambda^{-2}\widehat\bfH_\bfQ)^{-1}(\bfI + \lambda^{-2} \bfH_\bfQ)] \\
 \leq & \> n + \lambda^{-2} \trace(\bfH_\bfQ - \widehat\bfH_\bfQ).
\end{aligned}\]
Therefore, $\mc{E}_1 \leq \lambda^{-2}\theta_k$.
{For the third term, notice that $$\post - \posth = \lambda^{-2}\bfQ^{1/2} \left( \left( \bfI + \lambda^{-2}\bfH_\bfQ\right)^{-1} - \left( \bfI + \lambda^{-2}\widehat \bfH_\bfQ\right)^{-1}\right) \bfQ^{1/2}$$ and let $\bfD =(\bfI+ \lambda^{-2}\bfH_\bfQ)^{-1} - (\bfI + \lambda^{-2} \widehat\bfH_\bfQ)^{-1}$. Then}
\[\| \spost -{\bfs_k}\|_{\post^{-1}}^2 = \widehat\bfb\t\bfD(\bfI + \lambda^{-2} \bfH_\bfQ)\bfD \widehat\bfb \leq \|\bfD\widehat\bfb\|_2 \| (\bfI + \lambda^{-2} \bfH_\bfQ)\bfD \widehat\bfb\|_2, \]
where {$\widehat\bfb = \bfQ^{1/2}\bfA\t\bfR^{-1}\bfb$}. The inequality is due to Cauchy-Schwartz. Using \cref{eqn:invpert}, we can bound
\[ \|\bfD\widehat\bfb\|_2  \leq \frac{\omega_k\|{\widehat \bfb}\|_2}{ \lambda^{2}+\omega_k}.\]
Next, with $\bfE = \lambda^{-2}(\bfH_\bfQ - \widehat\bfH_\bfQ)$, consider {the simplification
\[  (\bfI + \lambda^{-2} \bfH_\bfQ)\bfD = -\bfE(\bfI + \lambda^{-2} \widehat\bfH_\bfQ)^{-1},\]
so that $\| (\bfI + \lambda^{-2} \bfH_\bfQ)\bfD \widehat\bfb\|_2 \leq \lambda^{-2} \omega_k \|\widehat\bfb\|_2 $.}
Here, we have used submultiplicativity and  the fact that singular values of $(\bfI + \lambda^{-2}\widehat\bfH_{\bfQ})^{-1}$ are at most $1$. We also see that $\|\widehat\bfb\|_2 = \alpha_1\beta_1$. Putting everything together, we see
\[  \mc{E}_3 \leq \frac{\lambda^{-2}\omega_k^2\alpha_1^2\beta_1^2}{\lambda^{2} + \omega_k} .\]
Gathering the bounds for $\mc{E}_1$, $\mc{E}_2$ and $\mc{E}_3$ we have the desired result.
\end{proof}

\begin{proof}[\cref{thm:dkl}]
The error in the KL-divergence satisfies
\[ |D_\text{KL} - \widehat{D}_\text{KL}| \leq \mc{E}_1 + \mc{E}_2 + \mc{E}_3,\]
where
\begin{eqnarray*}
\mc{E}_1  = &\> \frac12 |\trace(\bfI + \lambda^{-2}\bfH_{\bfQ})^{-1} - \trace (\bfI + \lambda^{-2}\widehat\bfH_{\bfQ})^{-1}| \\
\mc{E}_2 = & \> \frac12 | \log\det(\bfI + \lambda^{-2}\bfH_{\bfQ}) - \log\det(\bfI+\lambda^{-2}\widehat \bfH_\bfQ)| \\
\mc{E}_3 = & \> \frac12 \lambda^2 | (\spost-\bfmu)\t\bfQ^{-1}(\spost-\bfmu) - (\bfs_k-\bfmu)\t\bfQ^{-1}(\bfs_k-\bfmu)|.
\end{eqnarray*}
We tackle the first two terms together. As in the proof of \cref{thm:dklacc}, let $\bfM = \lambda^{-2}(\bfH_{\bfQ}) $, then with $\bfP = \widehat\bfV_k\widehat\bfV_k\t$ we have $\lambda^{-2}\widehat\bfH_\bfQ = \bfP\bfM\bfP$. Apply the first and the third parts of \cref{lemma:trdet} to obtain
\[ \mc{E}_1 \leq \frac{ \lambda^{-2}}{2}\trace(\bfH_{\bfQ} - \widehat\bfH_{\bfQ})  \qquad  \mc{E}_2 \leq\frac{  \lambda^{-2}}{2} \trace(\bfH_{\bfQ} - \widehat\bfH_{\bfQ}).\]
For the third term, let $\spost  =  \bfs_k + \bfe$, then
\[\mc{E}_3 =  \frac12 \lambda^2 | (\spost-\bfs_k)\t\bfQ^{-1}(\spost-\bfmu) + (\bfs_k-\bfmu)\t\bfQ^{-1}\bfe| . \]
Notice that $\bfe =  \bfs_{\rm post} - \bfs_k  = (\post - \posth) \bfA\t\bfR^{-1}\bfb$.  Let
$$\widehat\bfb \equiv \bfQ^{1/2}\bfA\t\bfR^{-1}\bfb = \alpha_1\beta_1 \bfQ^{1/2}\bfv_1,$$  and write
$$\bfQ^{-1/2}\bfe =   \left( (\lambda^2\bfI + \bfH_{\bfQ})^{-1} - (\lambda^2\bfI+ \widehat\bfH_{\bfQ})^{-1} \right) \widehat{\bfb}.$$
So, the submultiplicative inequality and \cref{eqn:invpert} implies
\[\begin{aligned} \|\bfQ^{-1/2}\bfe\|_2 \leq & \>   \lambda^{-2} \| (\bfI + \lambda^{-2}\widehat\bfH_{\bfQ})^{-1} - (\bfI+ \lambda^{-2}\bfH_{\bfQ})^{-1} \|_2\|\widehat{\bfb}\|_2 \\
 \leq & \> \lambda^{-2}\frac{\omega_k}{\lambda^2 + \omega_k} \alpha_1\beta_1\,,
\end{aligned}\]
where we have used \cref{appendix:bound}.
Next, applying the Cauchy-Schwartz inequality
\[|\bfe\t \bfQ^{-1} (\spost - \bfmu)| \leq\| \bfQ^{-1/2}\bfe\|_2 \|\bfQ^{-1/2}(\spost - \bfmu)\|_2.  \]
Then, rewriting $\spost = \bfmu + \post\bfA\t\bfR^{-1}\bfb$, we have
\[ \|\bfQ^{-1/2}(\spost - \bfmu)\|_2 = \|(\bfI + \lambda^{-2}\bfH_{\bfQ})^{-1} \widehat\bfb\|_2 \leq \|\widehat\bfb\|_2 = \alpha_1\beta_1,\]
since the singular values of $(\bfI + \lambda^{-2}\widehat\bfH_{\bfQ})^{-1}$ are less than $1$. The other term is bounded in the same way. So, we have
\[ \mc{E}_3  \leq \frac{\omega_k}{\lambda^2 + \omega_k} \alpha_1^2\beta_1^2. \]
Putting everything together along with
 $\mc{E}_1 + \mc{E}_2 \leq \lambda^{-2} \theta_k$
gives the desired result.
\end{proof}

\subsection{Proofs of \cref{sec:sampling}}
\label{sec:appendix3}

\begin{proof}[\cref{thm:sample}]
By {the} triangle inequality{, we have}
\[  \|\bfs - \widehat\bfs\|_{\lambda^2\bfQ^{-1}} \leq \|\spost-{\bfs_k}\|_{\lambda^2\bfQ^{-1}} + \|\bfS\bfepsilon - \widehat\bfS\bfepsilon\|_{\lambda^2\bfQ^{-1}}.\]
Similar to previous proofs, we use $\spost-\bfs_k = (\post - \posth) \bfA\t\bfR^{-1}\bfb = \lambda^{-2}\bfQ^{1/2}\bfD \widehat\bfb,$ where $\bfD = (\bfI + \lambda^{-2}\bfH_{\bfQ})^{-1} - (\bfI+ \lambda^{-2}\widehat\bfH_{\bfQ})^{-1}$ to get
\[ \|\spost-\bfs_k\|_{\lambda^2 \bfQ^{-1}}^2  = \widehat\bfb\t \bfD \bfQ^{1/2} \lambda^{-2} (\lambda^2 \bfQ^{-1}) \lambda^{-2}  \bfQ^{1/2}\bfD\widehat\bfb = \lambda^{-2} \|\bfD\widehat\bfb\|_2^2. \]
Thus,
{\[ \|\spost-{\bfs_k}\|_{\lambda^2\bfQ^{-1}} = \lambda^{-1}\|\bfD\widehat\bfb\|_2 \leq \lambda^{-1}\frac{\omega_k  \alpha_1\beta_1}{\lambda^2 + \omega_k}, \]}
For the second term, we write
$$\lambda\bfQ^{-1/2}(\bfS -\widehat\bfS)\bfepsilon =  \left[(\bfI + \lambda^{-2}\bfH_{\bfQ})^{-1/2} - (\bfI+ \lambda^{-2}\widehat\bfH_{\bfQ})^{-1/2} \right]\bfepsilon.$$
Then, applying submultiplicativity
\[ \|\bfS\bfepsilon -\widehat\bfS\bfepsilon\|_{{\lambda^2}\bfQ^{-1}} { =  \| \lambda\bfQ^{-1/2}(\bfS -\widehat\bfS)\bfepsilon\|_2}\leq \|(\bfI + \lambda^{-2}\widehat\bfH_{\bfQ})^{-1/2} - (\bfI+ \lambda^{-2}\bfH_{\bfQ})^{-1/2}\|_2{\|\bfepsilon\|_2}. \]
When we apply~\cite[Theorem X.1.1 and (X.2)]{bhatia2013matrix}, we have
\[ \|(\bfI + \lambda^{-2}\widehat\bfH_{\bfQ})^{-1/2} - (\bfI+ \lambda^{-2}\bfH_{\bfQ})^{-1/2}\|_2\leq \|\bfD\|_2^{1/2}.\]
From \cref{eqn:invpert}, $\|\bfD\|_2 \leq\omega_k/(\lambda^2 + \omega_k)$. Plugging this in gives the desired result.
\end{proof}

\bibliography{7references}
\bibliographystyle{abbrv}

\end{document}